\newtheorem{theorem}{Theorem}[section]
\newtheorem{lemma}[theorem]{Lemma}
\newtheorem{coro}[theorem]{Corollary}
\newtheorem{conj}[theorem]{Conjecture}
\newtheorem{prop}[theorem]{Proposition}
\newtheorem{claim}[theorem]{Claim}
\newtheorem{obs}[theorem]{Observation}
\newdefinition{definition}[theorem]{Definition}
\newdefinition{notation}[theorem]{Notation}
\newdefinition
{asp}[theorem]{Assumption}
\newdefinition{example}[theorem]{Example}
\newdefinition{exc}[theorem]{Exercise}
\begin{document}

\title
{A Ramsey Type problem for highly connected subgraphs}

\author[1]{Chunlok Lo}
\ead{clo42@gatech.edu}
\address[1]{
College of Computing, Georgia Institute of Technology, Atlanta, Georgia, USA 30332}

\author[2]{Hehui Wu\fnref{fn2}}
\ead{hhwu@fudan.edu.cn}
\address[2]{
Shanghai Center for Mathematical Sciences, 
Fudan University, Shanghai, China 200438}
\fntext[fn2]{Supported in part by National Natural Science Foundation of China (Grant No. 11931006), National Key Research and Development Program of China (Grant No. 2020YFA0713200), and the Shanghai Dawn Scholar Program (Grant No. 19SG01).}

\author[3]{Qiqin Xie\fnref{fn3}}
\ead{qqxie@shu.edu.cn}
\address[3]{
Department of Mathematics, College of Science,  
Shanghai University, Shanghai, China 200444}
\fntext[fn3]{Supported in part by National Natural Science Foundation of China (Grant No. 12201390) and National Key R\&D Program of China (Grant No. 2022YFA1006400).}

\begin{abstract}
Bollob\'{a}s and Gy\'{a}rf\'{a}s conjectured that for any $k, n \in \mathbb{Z}^+$ with $n > 4(k-1)$, 
every 2-edge-coloring of the complete graph on $n$ vertices 
leads to a $k$-connected monochromatic subgraph with at least $n-2k+2$ vertices. 
We find a counterexample with $n = \lfloor 5k-2.5-\sqrt{8k-\frac{31}{4}} \rfloor$, 
thus disproving the conjecture, 
and we show the conclusion holds for $n > 5k-2.5-\sqrt{8k-\frac{31}{4}}$ 
when $k \ge 16$. 

\end{abstract}

\begin{keyword}
Connectivity \sep Ramsey Theory
\end{keyword}

\maketitle

\section{Introduction}

Ramsey theory is one of the most important research areas in combinatorics. 
For any given integers $s, t$, 
the Ramsey number $R(s, t)$ is the smallest integer $n$, 
such that for any 2-edge-colored (red/blue) $K_n$, 
there must exist a red $K_s$ or a blue $K_t$. 
In 1930, Ramsey \cite{Ramsey30} proved the existence of Ramsey numbers. 
However, estimating Ramsey numbers is known to be notoriously challenging. 

There are many variations of the original Ramsey problem, 
including the one considering highly-connected subgraphs instead of cliques. 
There have been many studies concerning the existence of $k$-connected subgraphs, 
including Mader's \cite{Mader72} result in 1972, 
indicating that every graph with large average degree always contains a $k$-connected subgraph. 

To consider the existence of $k$-connected monochromatic subgraphs in edge-colored complete graphs, 
we let $r_c(k)$ denote the smallest integer such that 
every $c$-edge-colored complete graph on $r_c(k)$ vertices 
must contain a $k$-connected monochromatic subgraph. 
In 1983, Matula \cite{Mat83} proved $2c(k-1)+1 \le r_c(k) < (10/3)c(k-1)+1$. 
Moreover, for 2-edge-coloring, Matula \cite{Mat83} improved the upper bound to $r_2(k) < (3+\sqrt{11/3})(k-1)+1$. 
However, Matula's result does not have any restriction on the order of the $k$-connected monochromatic subgraph. 
In 2008, Bollob\'{a}s and Gy\'{a}rf\'{a}s \cite{BG08} proposed the following conjecture: 

\begin{conj}\label{conjBG}
Let $k, n$ be positive integers. For $n > 4(k-1)$, every 2-edge-colored $K_n$ contains a $k$-connected monochromatic subgraph with at least $n-2k+2$ vertices. 
\end{conj}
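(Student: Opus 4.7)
My plan is to attempt a proof by contradiction using an extremal argument. Suppose no monochromatic $k$-connected subgraph of $K_n$ has at least $n-2k+2$ vertices. Invoke Matula's theorem to produce some monochromatic $k$-connected subgraph, and let $H$ be a \emph{largest} such; say it is red on vertex set $S$ with $|S|=s<n-2k+2$, and set $T=V(K_n)\setminus S$, so that $|T|\ge 2k-1$.

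The first observation is a maximality consequence: every $v\in T$ has at most $k-1$ red neighbors in $S$, since otherwise $S\cup\{v\}$ would span a $k$-connected red subgraph strictly larger than $H$ (any $k-1$ deletions leave $H$ connected and $v$ still attached). Hence each $v\in T$ has at least $s-k+1$ blue neighbors in $S$, so the blue bipartite graph between $T$ and $S$ is very dense. The strategy is to convert this density into a large $k$-connected blue subgraph on $T$ together with a carefully chosen subset of $S$.

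To make this concrete I would analyse the entire blue graph $B$ on $V(K_n)$. If $B$ is itself $k$-connected we are done, with all $n\ge n-2k+2$ vertices. Otherwise $B$ has a vertex cut $C$ of size at most $k-1$, and $V\setminus C$ splits into blue components $V_1,V_2,\dots$, all of whose pairwise $K_n$-edges are red. If the two largest blue components both have size at least $k$, their union carries a complete red bipartite subgraph that is $k$-connected on $n-|C|\ge n-k+1\ge n-2k+2$ vertices, a contradiction. Otherwise $|V_1|\ge n-2(k-1)$, and one iterates the argument inside $V_1$, exploiting the fact that every $t\in T\cap V_1$ still has many blue neighbors in $S\cap V_1$.

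The main obstacle, I expect, is controlling this iteration without losing too many vertices: each pass spends up to $k-1$ vertices on a cut, and one must argue that the procedure terminates at a monochromatic $k$-connected subgraph of size at least $n-2k+2$ rather than something smaller. The threshold $n>4(k-1)$ leaves essentially no slack, and given the abstract's announced counterexample near $n\approx 5k$, I anticipate this step to be precisely where an honest argument fails: a small window of $n$ likely admits a configuration in which $H$, the inter-component red bridges, and the successive blue cuts interlock so that no monochromatic piece ever reaches the target size.
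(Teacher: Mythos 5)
The statement you are trying to prove is false, and the paper's treatment of it is a \emph{disproof}: Theorem~\ref{main}(1) and Section~3 construct, for every $k$, a $2$-coloring of $K_n$ with $n=5k-2\lceil\sqrt{2k-1}\rceil-3$ admitting no monochromatic $k$-connected subgraph on $n-2k+2$ vertices; for all sufficiently large $k$ this $n$ lies strictly inside the conjectured range $n>4(k-1)$. So no completion of your argument can exist, and the honest doubt you express in your last paragraph is exactly right. Concretely, the step that fails is the iteration. Your dichotomy for a blue cut $C$ of size at most $k-1$ should read: either $n-|C|-|V_1|\ge k$, in which case the red complete multipartite graph on $V\setminus C$ is $k$-connected on at least $n-k+1$ vertices (your version, requiring the two largest components each to have size $\ge k$, misses the case of many small components whose union exceeds $k-1$); or else $|V_1|\ge n-|C|-(k-1)\ge n-2k+2$, but then the blue graph on $V_1$ is merely connected, not $k$-connected, and repeating the argument inside $V_1$ degrades the guarantee to $|V_1|-2k+2=n-4k+4$, then $n-6k+6$, and so on. The density of blue edges between $T$ and $S$ that you extract from the maximality of $H$ is never strong enough to stop this erosion: each pass can genuinely spend a fresh cut of size $k-1$ plus a small component. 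This is precisely the route of Bollob\'as--Gy\'arf\'as, Liu--Morris--Prince ($n\ge 13k-15$) and Fujita--Magnant ($n>6.5(k-1)$), and it provably cannot be pushed to $n>4(k-1)$.

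What the paper does instead is formalize the full transcript of such iterated cuts as a $(2k-2,k)$-decomposition (Definition~\ref{decomposition}, Lemmas~\ref{exist decomposition} and~\ref{decomposition to no conn}), apply it to \emph{both} color classes simultaneously, and run a global edge-counting argument over the interaction of the two decompositions; even that only proves the statement for $n\ge 5k-\min\{\sqrt{4k-2}+3,\,0.5k+4\}$ (Theorem~\ref{main}(2)). Below that threshold, the Section~3 construction realizes the interlocking configuration you anticipated: the red graph is built directly from a strong decomposition with $l=k+1$ small cuts, and the blue graph admits an ordering of $2k-1$ vertices each with at most $k-1$ blue neighbors among the later vertices, so neither color contains a $k$-connected subgraph of order $n-2k+2$. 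You should therefore redirect your effort from proving the conjecture to either reproducing the counterexample or proving the weaker, correct threshold.
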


Note that the conclusion is not true for $n \le 4(k-1)$ by Matula's result \cite{Mat83} 
(also see \cite{BG08}). 
Moreover, no matter how large $n$ is, 
$n-2k+2$ is the best possible lower bound for the order of the $k$-connected subgraph 
by the example $B(n, k)$ in \cite{BG08}. 
Besides proposing the conjecture, Bollob\'{a}s and Gy\'{a}rf\'{a}s verified the conjecture for $k \le 2$,  and showed it is sufficient to prove the conjecture holds for $4k-3 \le n < 7k-5$. 
Liu, Morris, and Prince \cite{LMP09} verified the conjecture for $k=3$, 
and proved it for $n \ge 13k-15$. 
Later, Fujita and Magnant \cite{FM11} improved the bound to $n > 6.5(k-1)$. 
In 2016, {\L}uczak \cite{Lu15} claimed the proof of the conjecture. 
However, a gap has been found in the proof and not yet fixed \cite{HL} (also see \cite{Ma19}). 

Bollob\'{a}s and Gy\'{a}rf\'{a}s' conjecture could be generalized to multicolored graphs 
(see \cite{LMP08, FM13, KMN17, LW18}). 
Besides, there are some other approaches to force large highly connected subgraphs. 
For example, Fujita, Liu, and Sarkar \cite{FLS16, FLS18} proved the existence of large highly connected subgraphs 
with given independence number. 
The characterization of 2-edge-colored $K_n$ with no large $k$-connected monochromatic subgraphs has also been studied 
(see \cite{JWW14}), 

The main result of this paper is that we show Conjecture~\ref{conjBG} fails for $n = \lfloor 5k-2.5-\sqrt{8k-\frac{31}{4}} \rfloor$. 
On the other hand, we verify the conclusion for any larger $n$.  

\begin{theorem}\label{main}
\quad
\begin{enumerate}
\item Let $k,n \in \mathbb{Z}^+$. 
If $k \ge 16$ and $n > 5k-2.5-\sqrt{8k-\frac{31}{4}}$, 
then for any two spanning subgraphs $G_R$ and $G_B$ of $K_n$, where $E(G_R) \cup E(G_B)$ covers all edges of $K_n$, either $G_R$ or $G_B$ has a $k$-connected subgraph with at least $n-2k+2$ vertices. 
\item For every $k \in \mathbb{Z}^+$, 
let $n = \lfloor 5k-2.5-\sqrt{8k-\frac{31}{4}} \rfloor$. 
There exists a 2-edge-colored $K_n$, 
such that there is no $k$-connected monochromatic subgraph with at least $n-2k+2$ vertices. 
\end{enumerate}
\end{theorem}

Note that in Theorem~\ref{main}~(1), 
$G_R$ and $G_B$ may have common edges. 
It is not hard to see that in our statement, it makes no difference to use either the original or our extended definition of edge-coloring, 
which allow every edge to be colored more than once. 

In Section 2, we will give a decomposition structure to graphs with no large $k$-connected monochromatic subgraphs.  
We will prove Theorem \ref{main}~(1) in Section 3 and 4, 
and demonstrate the counterexample (Theorem~\ref{main}~(2)) in Section 5.

\section{Structures of graphs without large $k$-connected subgraphs}

In this section, 
we first introduce a decomposition for graphs with no $k$-connected subgraphs of large order. 
We start with some terminologies and notations that we will use throughout this note. 
We follow the notations and terminologies for graphs from \cite{GraphT}.

Let $G = (V, E)$ be a graph. 
$G$ is $k$-connected if and only if it has more than $k$ vertices
and does not have a vertex cut of size at most $k-1$.
For $S \subseteq V$, we use $G[S]$ to denote the subgraph of $G$ induced by $S$. 
We use $N_{G}(S)$ to denote the vertex set $\{v: v\notin S, \exists u \in S, uv \in E(G)\}$, 
and $N_{G}[S]$ to denote $S \cup N_{G}(S)$.
For two sets $S_1$ and $S_2$, we may use $S_1 - S_2$ to denote $S_1 \setminus S_2$. Moreover, we use $G-S$ to denote the subgraph of $G$ induced by $V(G)-S$. 
Let $e = uv$ where $u, v \in V(G)$ and $e \notin E(G)$. We use $G+e$ to denote the graph $(V(G), E(G) \cup \{e\})$.

For $S \subseteq V(G)$, 
we say $S$ is complete (resp. connected) in $G$ if $G[S]$ is a complete (resp. connected) subgraph of $G$. For disjoint $V_1, V_2 \subseteq V(G)$, we say $[V_1, V_2]$ is complete in $G$ if $G$ has a complete bipartite subgraph with partite sets $V_1$, $V_2$. 
We use $E(V_1, V_2)$ to denote the set of edges with one endpoint in $V_1$ and the other endpoint in $V_2$. 

For any positive integer $i$, we use $[i]$ to denote the set of all integers in $[1, i]$. Given a mapping $f$ and a set $X$, we denote $f(X)$ to be $\sum_{x\in X}f(x)$ if $f$ is a real-value function, and denote $f(X)$ to be $\bigcup_{x\in X}f(x)$ if the value of $f$ is a set.

\begin{definition}\label{decomposition}
Let $k \in \mathbb{Z}^+$, $f(k)$ be a non-negative integer. 
Let $G$ be a graph on $n$ vertices, where $n > f(k)+k$. 
We define an {\bf $(f(k), k)$-decomposition} of $G$ to be a sequence of triples $((A_i, C_i, D_i))_{i=1}^{l}$, such that 
\begin{enumerate}
\item $V(G)$ is a disjoint union of $A_1, C_1, D_1$ 
\item $C_i \cup D_i$ is a disjoint union of $A_{i+1}, C_{i+1}, D_{i+1}$, $i \in [l-1]$ 
\item $|C_i| \le k-1$, $i \in [l]$
\item $1 \le |A_i| \le |D_i|$, and there is no edge between $A_i$ and $D_i$, $i \in [l]$  
\item $|C_i|+|D_i| \ge n-f(k)$, $i \in [l-1]$ 
\item $|C_l|+|D_l| < n-f(k)$
\end{enumerate}
\end{definition}

By (1) and (2) of Definition \ref{decomposition}, we have: 

\begin{prop}\label{partition}
$V(G)$ is a disjoint union of $A_1, \dots, A_i, C_i, D_i$ for any $i \in [l]$.
\end{prop}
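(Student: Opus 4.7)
The plan is to prove this by straightforward induction on $i$, using only conditions (1) and (2) of Definition \ref{decomposition}. The base case $i=1$ is literally condition (1): it asserts that $V(G) = A_1 \sqcup C_1 \sqcup D_1$, which is exactly the desired decomposition.

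For the inductive step, I would assume that the identity $V(G) = A_1 \sqcup A_2 \sqcup \cdots \sqcup A_i \sqcup C_i \sqcup D_i$ (as a disjoint union) has been established for some $i \in [1, l-1]$. Condition (2) of Definition \ref{decomposition} then states that $C_i \cup D_i$ is the disjoint union $A_{i+1} \sqcup C_{i+1} \sqcup D_{i+1}$. Substituting this expression in place of $C_i \sqcup D_i$ in the inductive hypothesis immediately yields
\[
V(G) = A_1 \sqcup \cdots \sqcup A_i \sqcup A_{i+1} \sqcup C_{i+1} \sqcup D_{i+1},
\]
which is the claim at level $i+1$. Note that the disjointness of $A_{i+1}$ from each earlier $A_j$ (with $j \le i$) comes for free: since $A_{i+1} \subseteq C_i \cup D_i$ and $C_i \cup D_i$ is disjoint from $A_1, \ldots, A_i$ by the inductive hypothesis, no separate verification is required.

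There is essentially no obstacle to the argument; the proposition is a bookkeeping consequence of the recursive structure of the decomposition, and conditions (3)--(6) of Definition \ref{decomposition} are never needed. The only subtlety worth flagging is the disjointness inheritance described above, which makes the induction close cleanly.
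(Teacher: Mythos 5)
Your induction on $i$ using only conditions (1) and (2) of Definition \ref{decomposition} is correct and is exactly the argument the paper intends; the paper simply states the proposition as an immediate consequence of (1) and (2) without writing out the induction. Nothing further is needed.
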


We also consider edge partitions of $G$ with respect to the decomposition. For convenience, we will frequently use the following notations. 

\begin{notation}\label{edge type}
Let $((A_i, C_i, D_i))_{i=1}^l$ be an $(f(k),k)$-decomposition of $G$. 
\begin{enumerate}
    \item We use $A_{l+1}$ to denote $C_l \cup D_l$. 
    \item We say an edge $uv$ is an $AA$-type edge if there exists $i \in [l+1]$ such that $u, v \in A_i$. 
    We define $AC$-type and $AD$-type for $i \in [l]$ similarly. 
\end{enumerate}
\end{notation}

Thus we have the following propositions. 

\begin{prop}\label{Epartition}
Let $((A_i, C_i, D_i))_{i=1}^l$ be an $(f(k),k)$-decomposition of $G$. 
\begin{enumerate}
    \item $E(G)$ is a disjoint union of $AA$-type and $AC$-type edges. 
    \item All $AD$-type edges are in $\overline{G}$. 
    \item Let $K = G \cup \overline{G}$. Then $E(K)$ is a disjoint union of all $AA$-type, $AC$-type, and $AD$-type edges. 
\end{enumerate}
\end{prop}

\begin{proof}
(2) is followed by Definition \ref{decomposition}(4), and (1) is a corollary of (2) and (3). We only need to prove (3). 

Let $u, v$ be two distinct vertices in $V(G)$. 
By Proposition \ref{partition}, there must exist $i \in [l+1]$, such that $\{u, v\} \cap A_i \ne \emptyset$. 
we take the smallest such $i$. 
By symmetry, we may assume $u \in A_i$. 
Then by proposition \ref{partition}, 
$v$ must be in one of the disjoint sets $A_i$, $C_i$, and $D_i$. 
Thus the type of $uv$ is unique. 
Hence, $E(K)$ is a disjoint union of all $AA$-type, $AC$-type, and $AD$-types edges. 
\end{proof}

\begin{lemma}\label{exist decomposition}
Let $k \in \mathbb{Z}^+$, $f(k)$ be a non-negative function on $k$. 
Let $G$ be a graph on $n$ vertices with $n \ge f(k)+k+1$. 
If $G$ does not have a $k$-connected subgraph with at least $n-f(k)$ vertices, then $G$ has an $(f(k), k)$-decomposition. 
\end{lemma}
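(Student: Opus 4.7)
The plan is to construct the $(f(k),k)$-decomposition greedily, at each step peeling off a ``small side'' of a small vertex cut in the still-untouched portion of $G$.

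\textbf{Base step.} Put $H_1 := G$. Since $H_1$ is a subgraph of $G$ on $n \ge n-f(k)$ vertices, the hypothesis prevents $H_1$ from being $k$-connected. In the nontrivial range $n>k$ this forces a vertex cut $C_1$ with $|C_1|\le k-1$; partition the components of $G-C_1$ into two nonempty parts $A_1,D_1$ with $|A_1|\le|D_1|$. Conditions (1), (3), (4) of Definition~\ref{decomposition} then hold at $i=1$.

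\textbf{Inductive step.} Suppose triples $(A_1,C_1,D_1),\dots,(A_{i-1},C_{i-1},D_{i-1})$ have been built satisfying (1)--(4), with $|C_j|+|D_j|\ge n-f(k)$ for every $j\le i-1$. If $|C_{i-1}|+|D_{i-1}|<n-f(k)$, halt with $l:=i-1$ and (5)--(6) are satisfied. Otherwise set $H_i:=G(C_{i-1}\cup D_{i-1})$. Since $|V(H_i)|=|C_{i-1}|+|D_{i-1}|\ge n-f(k)$, the hypothesis again prevents $H_i$ from being $k$-connected, so it has a vertex cut $C_i\subseteq V(H_i)$ with $|C_i|\le k-1$. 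Split the components of $H_i-C_i$ into nonempty parts $A_i,D_i$ with $|A_i|\le|D_i|$. The crucial observation is that $A_i,D_i\subseteq V(H_i)$, so every $G$-edge between $A_i$ and $D_i$ already lies in $H_i$; since $C_i$ separates them there, no such edge exists in $G$. Hence (2)--(4) of Definition~\ref{decomposition} are met at step $i$.

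\textbf{Termination.} Each iteration shrinks $|V(H_{i+1})|=|V(H_i)|-|A_i|$ by at least $1$, so the process halts after finitely many steps; at termination $|C_l|+|D_l|<n-f(k)$, verifying (6), while (5) has been maintained throughout by the stopping rule.

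\textbf{Main obstacle.} The pinch point is guaranteeing a valid separation of $H_i$ at every iteration. As soon as $|V(H_i)|>k$, the fact that $H_i$ is not $k$-connected directly yields a cut of size at most $k-1$ and, combined with the observation above, a valid triple $(A_i,C_i,D_i)$. The only delicate case is the degenerate one $|V(H_i)|=k$, which can arise only at the extreme $n=f(k)+k$ and only at the very last step, where $H_i$ might be a copy of $K_k$ that admits no such splitting. Resolving this will require either arguing that such an $H_i$ cannot be a clique or refining the earlier cut to avoid leaving a $K_k$ behind; in the applications of the lemma in Section~4 this regime does not occur, so the greedy bookkeeping above suffices.
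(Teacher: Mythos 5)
Your construction is the paper's proof almost verbatim: repeatedly take a vertex cut $C_{i+1}$ of size at most $k-1$ in the remaining graph $G_i$ (which exists because $|G_i|\ge n-f(k)$ forces $G_i$ not to be $k$-connected), let $A_{i+1}$ be the small side of the resulting separation, recurse on $C_{i+1}\cup D_{i+1}$, and stop once fewer than $n-f(k)$ vertices remain. The degenerate case you flag is a legitimate observation --- if $|V(H_i)|=k$ (possible only at the extreme $n=f(k)+k$) a residual $K_k$ admits no cut, and the paper's justification ``$|G_i|\ge n-f(k)\ge k$'' silently elides that the definition of $k$-connectedness only yields a cut when the order strictly exceeds $k$ --- but the paper's own proof is no more careful than yours on this point, and the regime never occurs where the lemma is applied ($f(k)=2k-2$, $n\ge 4k-3$).
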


\begin{proof}
Let $G_0 = G$. 
Since $f(k)$ is non-negative, $|G_0| = n \ge n-f(k)$. 
We repeat the following steps until $|G_i| < n-f(k)$. 
\begin{enumerate}
\item Let $C_{i+1}$ be a cut of $G_i$ of size at most $k-1$. Since $|G_i| \ge n-f(k) \ge k+1$, there must exist one such cut. 
\item Let $A_{i+1}$ be the vertex set of smallest component of $G_i - C_{i+1}$, and $D_{i+1} = V(G_i) - (A_{i+1} \cup C_{i+1})$. 
\item Let $G_{i+1}$ be the subgraph of $G_i$ induced by $C_{i+1} \cup D_{i+1}$.
\end{enumerate}
The sequence of triples generated by the above procedure is an $(f(k), k)$-decomposition of $G$.
\end{proof}

\begin{definition}\label{strong decomposition}
We say an $(f(k), k)$-decomposition is {\bf strong} if $|A_i|+|C_i| < n-f(k)$, for any $i \in [l]$.
\end{definition}

\begin{lemma}\label{decomposition to no conn}
Let $k \in \mathbb{Z}^+$, $f(k)$ be a non-negative function on $k$. 
Let $G$ be a graph on $n$ vertices, where $n \ge f(k)+k+1$. 
If $G$ has a strong $(f(k), k)$-decomposition, 
then $G$ does not have a $k$-connected subgraph with at least $n-f(k)$ vertices. 
\end{lemma}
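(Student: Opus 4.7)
The plan is to argue by contradiction. Suppose $G$ has a strong $(f(k),k)$-decomposition $\{(A_i, C_i, D_i)\}_{i \in [1,l]}$ but also contains a $k$-connected subgraph $H$ with $|V(H)| \ge n - f(k)$. I will use the decomposition to push $V(H)$ layer by layer into the ``tail'' $C_i \cup D_i$, until condition (6) of Definition \ref{decomposition} forces a size contradiction.

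The key claim I would prove, by induction on $i \in [1,l]$, is that $V(H) \subseteq C_i \cup D_i$. For $i = 1$, condition (1) gives $V(G) = A_1 \sqcup C_1 \sqcup D_1$ and the argument below applies starting from the trivial inclusion $V(H) \subseteq V(G)$; for $i \ge 2$, the inductive hypothesis together with condition (2) yields $V(H) \subseteq C_{i-1} \cup D_{i-1} = A_i \sqcup C_i \sqcup D_i$. In both cases, condition (4) says no edge of $G$ joins $A_i$ to $D_i$, so $C_i \cap V(H)$ is a vertex cut of $H$ that separates $V(H) \cap A_i$ from $V(H) \cap D_i$. Since $|C_i| \le k-1$ by (3) and $H$ is $k$-connected, at least one of these two intersections must be empty. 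If $V(H) \cap D_i = \emptyset$, then $V(H) \subseteq A_i \cup C_i$, and the strong property (Definition \ref{strong decomposition}) gives $|V(H)| \le |A_i| + |C_i| < n - f(k)$, contradicting the hypothesis on $|V(H)|$. Hence $V(H) \cap A_i = \emptyset$, so $V(H) \subseteq C_i \cup D_i$, completing the induction. Specializing to $i = l$ and invoking condition (6) then yields $|V(H)| \le |C_l| + |D_l| < n - f(k)$, the desired contradiction.

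I do not expect a substantial obstacle here. The one point that needs careful checking is that $C_i \cap V(H)$ truly acts as a separator of $H$ whenever we invoke $k$-connectivity: we need $V(H) \setminus C_i$ to split into two non-empty sides with no edges between them in $H$, which is automatic from condition (4) the moment both $V(H)\cap A_i$ and $V(H)\cap D_i$ are non-empty (and a $k$-connected graph has more than $k$ vertices, so the sizes are compatible). Everything else is straightforward bookkeeping on the decomposition indices, and the ``strong'' hypothesis is used exactly once, precisely to rule out the case $V(H) \cap D_i = \emptyset$ in the inductive step.
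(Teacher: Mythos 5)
Your proof is correct and follows essentially the same argument as the paper's: the paper takes the smallest index $i^*$ with $A_{i^*}\cap V(H)\ne\emptyset$ and derives the contradiction there, while you phrase the same reasoning as an induction showing $V(H)\subseteq C_i\cup D_i$ for all $i$; both hinge on $C_i\cap V(H)$ being a cut of size at most $k-1$ and on the strong property to exclude $V(H)\subseteq A_i\cup C_i$.
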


\begin{proof}
Let $((A_i, C_i, D_i))_{i=1}^{l}$ be a strong $(f(k), k)$-decomposition of $G$. 
Suppose $G$ has a $k$-connected subgraph $H$ such that $|H| \ge n-f(k)$. 
Let $i^*$ be the smallest $i \in [l]$ such that $A_i \cap V(H) \ne \emptyset$. 
Note that by Proposition \ref{partition} and (6) of Definition \ref{decomposition}, such $i^*$ must exist. 
Then $H$ must be a subgraph of $G(A_{i^*} \cup C_{i^*} \cup D_{i^*})$. 
We claim $V(H) \cap D_{i^*} = \emptyset$. 
Otherwise by (3) and (4) of Definition \ref{decomposition}, $V(H) \cap C_{i^*}$ is a cut of $H$ of size at most $k-1$, 
which is a contradiction to the connectivity of $H$. 
Thus $H$ must be a subgraph of $G(A_{i^*} \cup C_{i^*})$. 
However since the decomposition is strong, $|H| \le |A_{i^*}|+|C_{i^*}| < n-f(k)$. 
We conclude that $G$ does not have a $k$-connected subgraph with at least $n-f(k)$ vertices. 
\end{proof}

For the rest part of this section, we apply the decomposition and its properties on 2-edge-colored complete graphs with no large $k$-connected monochromatic subgraphs. 
This will help us to set up the proof of Theorem \ref{main}(1). 

Let $k, n \in \mathbb{Z}^+$, where $n \ge 4k-3$, 
and $G$ be a complete graph on $n$ vertices. 
We color each edge of $G$ by at least one of color red or blue. 
Note that we allow the edges to be colored red and blue simultaneously. 
Let $R$ (resp. $B$) be the set of all red (resp. blue) edges. 
We set $G_R = (V, R)$ and $G_B = (V, B)$. 
For $S \subseteq V$, we use $R(S)$ (resp. $B(S)$) to denote the subgraph of $G_R$ (resp. $G_B$) induced by $S$. 
For convenience, We will use $N_{R}(S)$, $N_{B}(S)$, $N_{R}[S]$, and $N_{B}[S]$ to denote $N_{G_R}(S)$, $N_{G_B}(S)$, $N_{G_R}[S]$, and $N_{G_B}[S]$.

Suppose there exists $G$, such that $G$ does not have a $k$-connected monochromatic subgraph with at least $n-2k+2$ vertices. 
By Lemma \ref{exist decomposition}, $G_R$ must have a $(2k-2, k)$-decomposition $((A_i, C_i, D_i))_{i=1}^{l_R}$, 
and $G_B$ must have a $(2k-2, k)$-decomposition $((U_s, X_s, Y_s))_{s=1}^{l_B}$. We choose $G$ and the compositions according to the following rules: 

\begin{asp}\label{decomp asp}
We may assume
\begin{enumerate}
    \item $l_R$ and $l_B$ are maximized; 
    \item With respect to (1), $R$ and $B$ are maximized.  
\end{enumerate}
\end{asp}

Next, we will characterize the decompositions of $G_R$ and $G_B$ with more details. 
Note that the notations $U_{l_B+1}$, $UU$-type, $UX$-type, and $UY$-type are similar to those mentioned in Notation \ref{edge type}. 

\begin{prop}\label{basic prop}
We have the following propositions: 
\begin{enumerate}
    \item All $AD$-type edges are in $\overline{R} \subseteq B$, all $UY$-type edges are in $\overline{B} \subseteq R$, and no edge are both $AD$-type and $UY$-type. 
    \item $A_i$ is connected in $G_{\overline{B}}$ for all $i \in [l_R]$ and $U_s$ is connected in $G_{\overline{R}}$ for all $i \in [l_B]$. 
\end{enumerate} 
\end{prop}

\begin{proof}
(1) is followed by Proposition~\ref{Epartition}. 
For (2), suppose $A_j$ is not connected in $G_{\overline{B}}$ for some $j$ in $[l_R]$. 
Let $A_j^*$ be the vertex set of a connected component of $\overline{B}(A_j)$. 
In other words, $[A_j^*, A_j-A_j^*]$ is complete in $G_B$. 
We obtain $R'$ from $R$ by removing all edges between $A_j^*$ and $A_j-A_j^*$ in $R$. 
For all $i < j$, we set $(A_i', C_i', D_i') = (A_i, C_i, D_i)$. 
We set $(A_j', C_j', D_j') = (A_j^*, C_j, D_j \cup A_j - A_j^*)$ and $(A_{j+1}', C_{j+1}', D_{j+1}') = (A_j - A_j^*, C_j, D_j)$. 
And for all $i > j$, we set $(A_{i+1}', C_{i+1}', D_{i+1}') = (A_i, C_i, D_i)$. 
Then $((A_i', C_i', D_i'))_{i' = 1}^{l_R+1}$ is a decomposition of $G_{R'}$, 
which is a contradiction to Assumption \ref{decomp asp}(1). 
Thus, $A_i$ is connected in $G_{\overline{B}}$ for all $i \in [l_R]$. 
Similarly, we can prove $U_s$ is connected in $G_{\overline{R}}$ for all $s \in [l_B]$. 
\end{proof}

By Proposition \ref{basic prop}(1), we have the following corollary:

\begin{coro}\label{AD UY empty}
For any $i \in [l_R]$ and $s \in [l_B]$, 
\begin{enumerate}
    \item Either $A_i \cap U_s$ or $D_i \cap Y_s$ is empty;
    \item Either $A_i \cap Y_s$ or $U_s \cap D_i$ is empty. 
\end{enumerate}
\end{coro}

\begin{proof}
For (1), suppose $A_i \cap U_s$ and $D_i \cap Y_s$ are both non-empty for some $i \in [l_R]$ and $s \in [l_B]$. 
Let $u \in A_i \cap Y_s$ and $v \in D_i \cap U_s$. 
Then $uv$ is an $AD$-type and $UY$-type edge simultaneously,
which is a contradiction to Proposition \ref{basic prop}(1). 
Similarly we can prove (2). 
\end{proof}

\begin{prop}\label{B ext}
Suppose exists $i \in [l_R]$ such that $B(C_i \cup D_i)$ has a $k$-connected subgraph $H$ of order at least $2k-1$, 
then $B(A_{i} \cup V(H))$ is $k$-connected. 
\end{prop}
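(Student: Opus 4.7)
The plan is to verify $k$-connectedness of $B(A_i\cup V(H))$ directly from the definition: I will show that it has more than $k$ vertices and that no vertex set of size at most $k-1$ separates it. The structural input is that Definition \ref{decomposition}(4) gives no $R$-edge between $A_i$ and $D_i$, so $[A_i,D_i]$ is complete in $B$. Combined with $|V(H)|\ge 2k-1$, $V(H)\subseteq C_i\cup D_i$, and $|C_i|\le k-1$ from Definition \ref{decomposition}(3), this yields the crucial bound $|V(H)\cap D_i|\ge k$. In particular, every vertex of $A_i$ is $B$-adjacent to every vertex of $V(H)\cap D_i$.

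Next, I would fix an arbitrary $S\subseteq A_i\cup V(H)$ with $|S|\le k-1$ and show that $B(A_i\cup V(H))-S$ is connected. Since $H$ is $k$-connected and $|S\cap V(H)|\le k-1$, the graph $H-(S\cap V(H))$ is connected and nonempty (as $|V(H)\setminus S|\ge 2k-1-(k-1)=k\ge 1$), so it supplies a connected spanning subgraph on $V(H)\setminus S$ inside $B(A_i\cup V(H))-S$. For each surviving $a\in A_i\setminus S$, the inequality $|V(H)\cap D_i|\ge k>|S|$ guarantees that $a$ still has a $B$-neighbor in $V(H)\cap D_i\setminus S$, pulling $a$ into the same component as $V(H)\setminus S$. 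If instead $A_i\subseteq S$, there are no $A_i$-vertices left to reattach and the connected residue of $H$ already spans $B(A_i\cup V(H))-S$.

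Finally, the order condition $|A_i\cup V(H)|\ge|V(H)|\ge 2k-1>k$ (using $|A_i|\ge 1$ to cover $k=1$) is immediate. Taken together, these give $k$-connectedness. The argument is essentially a one-line counting step followed by reattaching $A_i$ through the complete bipartite structure in $B$; the single point that requires attention is the bound $|V(H)\cap D_i|\ge k$, since that is what forces every surviving $A_i$-vertex to retain a neighbor in the connected residue of $H$ after $S$ is deleted. I do not anticipate any real obstacle beyond checking this inequality carefully.
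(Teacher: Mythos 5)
Your proposal is correct and follows essentially the same route as the paper: both hinge on the bound $|V(H)\cap D_i|\ge |V(H)|-|C_i|\ge (2k-1)-(k-1)=k$ together with the completeness of $[A_i, V(H)\cap D_i]$ in $B$. The paper simply invokes the standard expansion fact at that point, whereas you verify it directly by checking that no set of at most $k-1$ vertices can disconnect $B(A_i\cup V(H))$; the extra detail is sound.
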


\begin{proof}
Since $H$ is a subgraph of $B(C_i \cup D_i)$ and $|C_i| \le k-1$, 
we have $|V(H) \cap D_i| = |V(H)| - |V(H) \cap C_i| \ge (2k-1) - (k-1) = k$. 
By Proposition \ref{basic prop} (1), $[A_i, V(H) \cap D_i]$ is complete in $B(A_{i} \cup V(H))$. 
Thus, $B(A_{i} \cup V(H))$ is $k$-connected. 
\end{proof}

By Definition \ref{decomposition} (2), $C_i \cup D_i = A_{i+1} \cup C_{i+1} \cup D_{i+1}$, $i \in [l_R-1]$. 
We will have the following corollary if we apply Proposition \ref{B ext} recursively: 

\begin{coro}\label{B ext induc}
Suppose there exists $i \in [l_R]$ such that $B(C_i \cup D_i)$ has a $k$-connected subgraph $H$ of order at least $2k-1$, 
then $B(A_1 \cup A_2 \cup \dots \cup A_{i} \cup V(H))$ is $k$-connected. 
\end{coro}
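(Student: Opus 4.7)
The plan is to carry out a downward induction on $j$, starting from $j = i$ and decreasing to $j = 1$, where at each step we apply Proposition \ref{B ext} to a slightly enlarged $k$-connected blue subgraph. Concretely, I would define a sequence of subgraphs $H^{(i)}, H^{(i-1)}, \dots, H^{(1)}$ of $B$ by setting $H^{(i)} = H$ and, assuming $H^{(j)}$ has been constructed and is $k$-connected as a subgraph of $B(C_j \cup D_j)$ with $|V(H^{(j)})| \ge 2k-1$, letting $H^{(j-1)}$ be the subgraph $B(A_j \cup V(H^{(j)}))$.

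The base case is handled directly by Proposition \ref{B ext}: since $H$ is a $k$-connected subgraph of $B(C_i \cup D_i)$ of order at least $2k-1$, the graph $B(A_i \cup V(H)) = H^{(i-1)}$ is $k$-connected. For the inductive step, I need to verify two things before invoking Proposition \ref{B ext} again. First, that $V(H^{(j)}) \subseteq C_{j-1} \cup D_{j-1}$: this follows from $V(H^{(j)}) = A_j \cup V(H^{(j+1)}) \subseteq A_j \cup C_j \cup D_j$, and by clause (2) of Definition \ref{decomposition}, $A_j \cup C_j \cup D_j = C_{j-1} \cup D_{j-1}$. Second, that $|V(H^{(j)})| \ge 2k-1$: this is immediate because $|V(H^{(j)})| \ge |V(H)| \ge 2k-1$ (the vertex sets only grow). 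So Proposition \ref{B ext} applies and $H^{(j-1)} = B(A_j \cup V(H^{(j)}))$ is $k$-connected.

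Unwinding the recursion, $V(H^{(1)}) = A_1 \cup A_2 \cup \dots \cup A_i \cup V(H)$, and $H^{(1)}$ is $k$-connected, which is exactly the conclusion of the corollary.

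There is no real obstacle here; the proof is essentially a bookkeeping exercise to confirm that the hypotheses of Proposition \ref{B ext} remain satisfied at each step. The only point that requires a moment's care is the use of Definition \ref{decomposition}(2) to rewrite $A_j \cup C_j \cup D_j$ as $C_{j-1} \cup D_{j-1}$, which is what allows us to move the ``current'' $k$-connected subgraph into the domain where the next application of Proposition \ref{B ext} makes sense.
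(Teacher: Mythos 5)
Your proof is correct and is exactly the paper's argument: the paper simply observes that Corollary \ref{B ext induc} follows by applying Proposition \ref{B ext} recursively, using Definition \ref{decomposition}(2) to rewrite $A_{j+1}\cup C_{j+1}\cup D_{j+1}$ as $C_j\cup D_j$ at each step, which is precisely your downward induction. The only blemish is a harmless off-by-one in your inductive step (to apply Proposition \ref{B ext} at index $j$ you need $V(H^{(j)})=A_{j+1}\cup V(H^{(j+1)})\subseteq A_{j+1}\cup C_{j+1}\cup D_{j+1}=C_j\cup D_j$, rather than the containment in $C_{j-1}\cup D_{j-1}$ as written), which does not affect the substance.
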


\begin{claim}\label{Ai Us bound}
$|A_i| \le k-1$, $\forall i \in [l_R]$. 
$|U_s| \le k-1$, $\forall s \in [l_B]$. 
\end{claim}

\begin{proof}
Suppose there exists $i$ such that $|A_i| \ge k$.
By (4) of Definition \ref{decomposition}, $|D_i| \ge |A_i| \ge k$. 
Since $[A_i, D_i]$ is complete in $G_B$, we have $B(A_i \cup D_i)$ is $k$-connected. 
If $i = 1$, $B(A_1 \cup D_1)$ is a $k$-connected subgraph of $B$. 
If $i \ge 2$, 
By (2) of Definition \ref{decomposition}, $B(A_i \cup D_i)$ is a $k$-connected subgraph of $B(C_{i-1} \cup D_{i-1})$. 
Moreover, $|B(A_i \cup D_i)| = |A_i| + |D_i| \ge k + k > 2k-1$. 
Thus by applying Corollary \ref{B ext induc} on $(i-1)$ and  $H = B(A_i \cup D_i)$, 
we have $B(A_1 \cup A_2 \cup \dots \cup A_{i} \cup D_{i})$ is $k$-connected. 
However, by Proposition \ref{partition} and Definition \ref{decomposition}(3), 
$|A_1 \cup A_2 \cup \dots \cup A_{i} \cup D_{i}| = |V(G)| - |C_i| \ge n - (k-1) \ge n - 2k + 2$, a contradiction. 
Thus, $|A_i| \le k-1$, $\forall i \in [l_R]$. 
By symmetry, $|U_s| \le k-1$, $\forall s \in [l_B]$. 
\end{proof}

For convenience, we use $A$ (resp. $U$) to denote $A_1 \cup A_2 \cup \dots \cup A_{l_R}$ (resp. $U_1 \cup U_2 \cup \dots \cup U_{l_B}$). 
Combining (5)(6) of Definition \ref{decomposition} and Proposition \ref{partition}, we have the following corollary: 

\begin{coro}\label{A U bound}
$2k-1 \le |A| \le 3k-3$. 
$2k-1 \le |U| \le 3k-3$.
\end{coro}

\begin{proof}
By Definition \ref{decomposition} (6) and Proposition \ref{partition}, 
$\sum_{i=1}^{l_R} |A_i| = n - (|C_{l_R}|+|D_{l_R}|) \ge n - (n-2k+1) = 2k-1$. 
By Definition \ref{decomposition} (5) and Proposition \ref{partition}, 
$\sum_{i=1}^{l_R} |A_i| = (\sum_{i=1}^{l_R-1} |A_i|)+|A_{l_R}| = n - (|C_{l_R-1}|+|D_{l_R-1}|) + |A_{l_R}| \le n - (n-2k+2) + (k-1) = 3k-3$. 
By symmetry, $2k-1 \le |U| \le 3k-3$. 
\end{proof}

The last two claims of this section are leaded by the maximality of $R$ and $B$. 

\begin{claim}\label{RB}
$R$ is the disjoint union of all $AA$-type and $AC$-type edges, and $B$ is the disjoint union of all $UU$-type and $UX$-type edges. 
\end{claim}

\begin{proof}
We will first prove that all $AA$-type edges are in $R$. 
Suppose there exists $i' \in [l_R+1]$ and $u, v \in A_{i'}$, such that $uv \notin R$.
Consider $R' = R + uv$. 
Since there does not exist $i^* \in [l_{R}]$ such that $uv$ is an edge between $A_{i^*}$ and $D_{i^*}$, 
$((A_i, C_i, D_i))_{i=1}^{l_R}$ is still a $(2k-2, k)$-decomposition of $G_{R'}$. 
Moreover, by Claim \ref{Ai Us bound} and (3) of Definition \ref{decomposition}, 
for any $i \in [l_R]$, $|A_i|+|C_i| \le (k-1) + (k-1) \le n-2k+2$ since $n \ge 4k-3$. 
Hence, the decomposition is strong.
And by Lemma \ref{decomposition to no conn}, 
$G_{R'}$ does not have a $k$-connected subgraph with at least $n-2k+2$ vertices, a contradiction to the maximality of $R$. 
Thus all $AA$-type edges are in $R$. 

Similarly, we can prove all $AC$-type edges are in $R$, 
and all $UU$-type and $UX$-type edges are in $B$. 
By Proposition \ref{Epartition}, 
$R$ is the disjoint union of all $AA$-type and $AC$-type edges, and $B$ is the disjoint union of all $UU$-type and $UX$-type edges. 
\end{proof}

\begin{claim}\label{cut size}
$|C_i| = |X_s| = k-1$ for all $i \in [l_R]$ and $s \in [l_B]$. \end{claim}

\begin{proof}
Suppose there exists $i' \in [l_R]$, such that $|C_{i'}| < k-1$. 
Let $u$ be a vertex in $D_{i'}$. 
Consider $R'$ to be the edge set which consists $R$ and all edges between $A_{i'}$ and $u$. 
Let $C'_{i'} = C_{i'} \cup \{u\}$, and $C'_i = C_i$ for all $i \ne i'$. 
Then $((A_i, C'_i, D_i))_{i=1}^{l_R}$ is a strong $(2k-2, k)$-decomposition of $G_{R'}$. 
By Lemma \ref{decomposition to no conn}, 
$G_{R'}$ does not have a $k$-connected subgraph with at least $n-2k+2$ vertices, a contradiction to the maximality of $R$. 
Thus, $|C_i| = k-1$ for all $i \in [l_R]$. 
By symmetry, $|X_s| = k-1$ for all $s \in [l_B]$. 
\end{proof}

\section{Proof of Theorem~\ref{main}(1)}

In this section, 
we prove Theorem~\ref{main}(1).  Suppose it is not true, there exists a 2-edge-colored $K_n$
that has no $k$-connected monochromatic subgraph with at least $n-2k+2$ vertices, where $n$ and $k$ are integers satisfy $n> 5k-2.5-\sqrt{8k-\frac{31}{4}}$ and $k\ge 16$. Note that by the examples we mentioned in Section 1 (see \cite{BG08, Mat83}), 
we may assume $n \ge 4k-3$. 
We follow all the assumptions and claims in section 2 in the proof of Theorem~\ref{main}(1).

We start with the following Observation:
\begin{obs}
\begin{equation*}\label{EQ:alledges}(k-1)(|A|+|U|)+\sum_{i=1}^{l_R+1} \binom{|A_i|}{2}+\sum_{s=1}^{l_B+1}\binom{|U_s|}{2}=|R|+|B|=\binom{n}{2}+|R\cap B|.
\end{equation*}
\end{obs}
\begin{proof}
By claim~\ref{RB} and claim~\ref{cut size}, we have $|R|=\sum_{i=1}^{l_R} |A_i||C_i|+\sum_{i=1}^{l_R+1}\binom{|A_i|}{2}=(k-1)|A|+\sum_{i=1}^{l_R+1}\binom{|A_i|}{2}$ and $|B|=\sum_{s=1}^{l_B} |U_s||X_s|+\sum_{s=1}^{l_B+1}\binom{|U_s|}{2}=(k-1)|U|+\sum_{s=1}^{l_B+1}\binom{|U_s|}{2}$. Sum them up, we have the above equation.
\end{proof}

\begin{definition}
In $R\cap B$, let $P$ consist of all edges that are both $AC$-type and $UX$-type, all the $AC$-type edges in $E(U_{l_B+1},U_{l_B+1})$, and all the $UX$-type edges in $E(A_{l_R+1}, A_{l_R+1})$. Note that $U_{l_B+1}=V-U$ and $A_{l_R+1}=V-A$. 

Let $i \in [l_R]$ and $s \in [l_B]$. Given a vertex $v\in A_i\cap U_s$, let $Q_R(v)$ be the family of edges $uv$ with $u$ in $A_i\cap Y_s$. Similarly, we define $Q_B(v)$ and let $Q(v)=Q_R(v)\cup Q_B(v)$. 
Let $Q_R = \cup_{v\in A\cap U}Q_R(v)$, $Q_B = \cup_{v\in A\cap U}Q_B(v)$, and $Q=Q_R\cup Q_B$. 
\end{definition}

The following is the key formula for our reminding argument in this paper.
\begin{claim}\label{CLM:Formula}
\begin{align*}
    &(5k-3-n)|A\cap U|-(2k-1)-\frac{1}{2}|A\cap U|^2
    \\=&(|A|-2k+1)(|U|-2k+1)
+(|A|+|U|-4k+2)(k-|A\cap U|)\\
&+\sum_{i=1}^{l_R}\sum_{s=1}^{l_B}((k-1)|A_i\cap U_s|-\frac{1}{2}|A_i\cap U_s|^2-|Q(A_i\cap U_s)|)+|P|. 
\end{align*}
\end{claim}
\begin{proof}
Note that in $R-B$, $Q_R$ consists of all $UY$-type edges in $E[A_i,A_i]$ for $i\in [l_R]$, and in $B-R$, $Q_B$ consists of all $AD$-type edges of $E[U_s,U_s]$ for $s\in [l_B]$. Moreover, by the definition of $P$,  $R\cap B-P$ consists of the edges in $E(A_i, A_i)$ for $i\in [l_R]$ and edges in $E(U_s, U_s)$ for $s\in [l_B]$ but not in $Q$, and the edges in $E[V-A-U, V-A-U]$. Therefore, we have 
$$|R\cap B|-|P|=\binom{|V-A-U|}{2}+\sum_{i=1}^{l_R} \binom{|A_i|}{2}+\sum_{s=1}^{l_B}\binom{|U_s|}{2}-\sum_{i=1}^{l_R}\sum_{s=1}^{l_B}\binom{|A_i\cap U_s|}{2}-|Q|,$$ 
Sum up with Observation~\ref{EQ:alledges}, we can get
\begin{align*}
&(k-1)(|A|+|U|)+\binom{|V-A|}{2}+\binom{|V-U|}{2}-|P|\\
=&\binom{n}{2}+\binom{|V-A-U|}{2}-\sum_{i=1}^{l_R}\sum_{s=1}^{l_B}\binom{|A_i\cap U_s|}{2}-|Q|.
\end{align*}

Therefore
\begin{align*}
&\sum_{i=1}^{l_R}\sum_{s=1}^{l_B}\binom{|A_i\cap U_s|}{2}+|Q|-|P|\\
= &\binom{n}{2}+\binom{n-|A\cup U|}{2}-\binom{n-|A|}{2}-\binom{n-|U|}{2}-(k-1)(|A|+|U|)\\
=&\frac{2n-1}{2}(-|A\cup U|+|A|+|U|)+\frac{1}{2}(|A\cup U|^2-|A|^2-|U|^2)
-(k-1)(|A|+|U|)\\
=&(n-\frac{1}{2})|A\cap U|+\frac{1}{2}((|A|+|U|-|A\cap U|)^2-|A|^2-|U|^2)-(k-1)(|A|+|U|)\\
=&(n-\frac{1}{2})|A\cap U|+|A||U|-(|A|+|U|)|A\cap U|+\frac{1}{2}|A\cap U|^2-(k-1)(|A|+|U|)\\
=&(2k-1)+|A \cap U|(n-4k+1.5) + \frac{1}{2}|A\cap U|^2+(|A|-2k+1)(|U|-2k+1)\\
&+(|A|+|U|-2(2k-1))(k-|A\cap U|).
\end{align*}
The last equation can be verified by expansion, and now we can use the property that $A$ and $U$ have size at least $2k-1$ by Corollary~\ref{A U bound}. 

As 
\begin{align*}\sum_{i=1}^{l_R}\sum_{s=1}^{l_B}\binom{|A_i\cap U_s|}{2}=&
\sum_{i=1}^{l_R}\sum_{s=1}^{l_B}(\frac{1}{2}|A_i\cap U_s|^2-(k-1)|A_i\cap U_s|+(k-1.5)|A_i\cap U_s|)\\
=&(k-1.5)|A\cap U|+\sum_{i=1}^{l_R}\sum_{s=1}^{l_B}(\frac{1}{2}|A_i\cap U_s|^2-(k-1)|A_i\cap U_s|)
\end{align*}
and 
$$Q = \sum_{i=1}^{l_R}\sum_{s=1}^{l_B}Q(A_i \cap U_s),$$
we have 
\begin{align*}
    &(5k-3-n)|A\cap U|-(2k-1)-\frac{1}{2}|A\cap U|^2
    \\=&(|A|-2k+1)(|U|-2k+1)
+(|A|+|U|-4k+2)(k-|A\cap U|)\\
&+\sum_{i=1}^{l_R}\sum_{s=1}^{l_B}((k-1)|A_i\cap U_s|-\frac{1}{2}|A_i\cap U_s|^2-|Q(A_i\cap U_s)|)+|P|. 
\end{align*}
\end{proof}

For $i\in [l_R]$ and $s\in [l_B]$ with $A_i\cap U_s\not=\emptyset$,  if $D_i\cap U_s=\emptyset$, then we put $A_i\cap U_s$ in a set $A^*_i$, if $A_i\cap Y_s=\emptyset$, then we put $A_i\cap U_s$ in a set $U^*_s$. If both of $A_i\cap Y_s$ and $D_i\cap U_s$ are empty, then we arbitrarily put $A_i\cap U_s$ in $A^*_i$ or $U^*_s$. Let $A^*=\bigcup_{i=1}^{l_R} A^*_i$ an let $U^*=\bigcup_{s=1}^{l_B} U^*_s$. 

By the fact that every edge is either in $R$ or $B$, we immediately have the following claim.
\begin{claim}\label{CLM:A*U*}
The followings are true for $A^*$ and $U^*$:
\begin{enumerate}
\item $A\cap U=A^*\sqcup U^*$.
\item $Q(v)=Q_R(v)$ for all $v\in A^*$ and $Q(v)=Q_B(v)$ for all $v\in U^*$.
\end{enumerate}
\end{claim}

\begin{proof}
(1) By Corollary \ref{AD UY empty}(2), $A_i\cap U_s\subseteq A^*_i$ or $A_i\cap U_s\subseteq U^*_s$. Also, by the definition of $A^*_i$ and $U^*_s$, we have $A^*_i\cap U^*_s$ is empty. Hence, $A\cap U=A^*\sqcup U^*$. 

(2) If $v\in A^*_i$, suppose $v\in A_i\cap U_s$, by definition of $A^*_i$, we have $D_i\cap U_s=\emptyset$, therefore $Q_B(v)=\emptyset$, hence $Q(v)=Q_R(v)$. Similarly, if $v\in U^*$, we have $Q(v)=Q_B(v)$.
\end{proof}

\begin{claim}
For any $i\in [l_R], s\in [l_B]$, we have to following:
\begin{enumerate}
\item $|P|\ge \sum_{i}|A_i-U||C_i-U|+\sum_{s}|U_s-A||X_s-A|$.
\item $\sum_{A_i\cap U_s\subseteq A^*_i}((k-1)|A_i\cap U_s|-\frac{1}{2}|A_i\cap U_s|^2-|Q(A_i\cap U_s)|)
\ge |A^*_i|(k-1-|A_i|)+\frac{1}{2}|A^*_i|^2.$
\item $\sum_{A_i\cap U_s\subseteq U^*_s}((k-1)|A_i\cap U_s|-\frac{1}{2}|A_i\cap U_s|^2-|Q(A_i\cap U_s)|)
\ge |U^*_s|(k-1-|U_s|)+\frac{1}{2}|U^*_s|^2.$
\end{enumerate}
\end{claim}
\begin{proof}
(1) follows from the definition of $P$, that it contains all the $AC$-type edges in $E(U_{l_B+1},U_{l_B+1})$, and the $UX$-type edges in $E(A_{l_R+1}, A_{l_R+1})$. 

By symmetric, we only need to show (2).
\begin{align*}
    &\sum_{A_i\cap U_s\subseteq A^*_i}(|A_i\cap U_s||A_i|-|Q(A_i\cap U_s)|)\\
    =&\sum_{A_i\cap U_s\subseteq A^*_i}|A_i\cap U_s|(|A_i|-|A_i\cap Y_s|)\\
    \ge&\sum_{A_i\cap U_s\subseteq A^*_i}|A_i\cap U_s|\sum_{\substack{A_i\cap U_t\subseteq A^*_i\\t\le s}}|A_i\cap U_t|\\
    =&\left(\sum_{A_i\cap U_s\subseteq A^*_i}|A_i\cap U_s|\right)^2+\frac{1}{2}\sum_{A_i\cap U_s\subseteq A^*_i}|A_i\cap U_s|^2\\
    =&\frac{1}{2}|A^*_i|^2+\frac{1}{2}\sum_{A_i\cap U_s\subseteq A^*_i}|A_i\cap U_s|^2
\end{align*}
Therefore, we have 
\begin{align*}
&\sum_{A_i\cap U_s\subseteq A^*_i}((k-1)|A_i\cap U_s|-\frac{1}{2}|A_i\cap U_s|^2-|Q(A_i\cap U_s)|)\\
&\ge \sum_{A_i\cap U_s\subseteq A^*_i}(|A_i\cap U_s|(k-1-|A_i|)+(|A_i\cap U_s||A_i|-|Q(A_i\cap U_s)|-\frac{1}{2}|A_i\cap U_s|^2))\\
&\ge |A^*_i|(k-1-|A_i|)+\frac{1}{2}|A^*_i|^2.
\end{align*}
\end{proof}

\begin{claim}
    $\sum_{i=1}^{l_R} |A_i^*|(k-1-|A_i|)+\frac{1}{2}|A_i^*|^2+\sum_{s=1}^{l_B} |U_s^*|(k-1-|U_s|)+\frac{1}{2}|U_s^*|^2\ge \frac{1}{12}|A\cap U|^2$
\end{claim}
\begin{proof}
Assume $|A_{j_1}^*|\ge |A_{j_2}^*|\ge |A_{j_3}^*|\dots$, we have 
    \begin{align*}
         &\sum_{i=1}^{l_R} |A_i^*|(k-1-|A_i|)+\frac{1}{2}\sum_{i=1}^{l_R}|A_i^*|^2\\
         \ge &|A_{j_1}^*|(k-1-(k-1))+|A_{j_2}^*|(k-1-(k-1))+|A_{j_3}^*|(k-1-(|A|-2(k-1)))\\
         &+\sum_{i=4}^{l_R}|A_{j_i}^*|(k-1-0)+\frac{1}{2}\sum_{i=1}^{l_R}|A_i^*|^2\\
        \ge & \sum_{i=4}^{l_R}|A_{j_i}^*|(k-1)+\frac{1}{2}\sum_{i=1}^{l_R}|A_i^*|^2\\
        \ge &\frac{1}{6}(|A_{j_1}^*|+|A_{j_2}^*|+|A_{j_3}^*|)^2+\frac{1}{3}\sum_{i=4}^{l_R}|A_{j_i}^*|(\sum_{i=1}^{l_R}|A_{j_i}^*|)\\
        \ge & \frac{1}{6}(\sum_{i=1}^{l_R}|A_{j_i}^*|)^2=\frac{1}{6}|A^*|^2.
    \end{align*}
    Similarly, we have $\sum_{s=1}^{l_B} |U_s^*|(k-1-|U_s|)+\frac{1}{2}|U_s^*|^2\ge \frac{1}{6}|U^*|^2$. Hence $$\sum_{i=1}^{l_R} |A_i^*|(k-1-|A_i|)+\frac{1}{2}|A_i^*|^2+\sum_{s=1}^{l_B} |U_s^*|(k-1-|U_s|)+\frac{1}{2}|U_s^*|^2\ge \frac{1}{6}(|A^*|^2+|U^*|^2)\ge \frac{1}{12}|A\cap U|^2.$$
\end{proof}

Together with Claim~\ref{CLM:Formula}, we immediately have the following:
\begin{coro}
\begin{align*}
    &(5k-3-n)|A\cap U|-(2k-1)-\frac{7}{12}|A\cap U|^2\\
    \ge& (|A|-2k+1)(|U|-2k+1)
+(|A|+|U|-4k+2)(k-|A\cap U|).
\end{align*}
\end{coro}
For convenience, let $\lambda=5k-3-n$. Since $n > 5k-2.5-\sqrt{8k-\frac{31}{4}}$, we have $\lambda<\sqrt{8k-\frac{31}{4}}-\frac{1}{2}$, hence $\lambda^2+\lambda<8k-8$ if $\lambda\ge 0$.

\begin{claim}
$|A \cap U|\le k-1$.
\end{claim}

\begin{proof}
If $|A\cap U|\ge k$, as $0\le |A|-2k+1, |U|-2k+1\le k-1$,  we have 
\begin{align*}
    &\lambda|A\cap U|-(2k-1)-\frac{7}{12}|A\cap U|^2\\
    \ge& (|A|-2k+1)(|U|-2k+1)+(|A|+|U|-4k+2)(k-|A\cap U|)\\
=&((|A|-2k+1)+(k-|A\cap U|))((|U|-2k+1)+(k-|A\cap U|))- (k-|A\cap U|)^2\\
\ge& ((k-1)+k-|A\cap U|)(k-|A\cap U|)- (k-|A\cap U|)^2\\
=&(k-1)(k-|A\cap U|).
\end{align*}
That is, 
$$0\ge (k^2+k-1)-(k-1+\lambda)|A\cap U|+\frac{7}{12}|A\cap U|^2.$$
So we should have $k-1+\lambda>0$ and $(k-1+\lambda)^2\ge \frac{7}{3}(k^2+k-1)$, hence $\lambda\ge \lceil\sqrt{\frac{7}{3}(k^2+k-1)}-(k-1)\rceil\ge \sqrt{8k-\frac{31}{4}}-\frac{1}{2}$. The last inequality is true when $k\ge 16$.

Thus we must have $|A\cap U|\le k-1$.
\end{proof}

Let $\tau=k-1-|V-A-U|$, let $I^*=\{i: A^*_i\not=\emptyset\}$, let $S^*=\{s: U^*_s\not=\emptyset\}$.

\begin{claim}
We have the following:
\begin{enumerate}
\item There exists a $i_1\in I^*$, such that  $$\sum_{i=1}^{l_R}(|A^*_i|(k-1-|A_i|)+\frac{1}{2}|A^*_i|^2+|A_i-U||C_i-U|)
\ge \frac{1}{2}|A^*|^2+\sum_{i\not=i_1}|A^*_i|(n-|A|-|U|).$$
\item There exists a $s_1\in S^*$, such that  $$\sum_{s=1}^{l_B}(|U^*_s|(k-1-|U_s|)+\frac{1}{2}|U^*_s|^2+|U_s-A||X_s-A|)
\ge \frac{1}{2}|U^*|^2+\sum_{s\not=s_1}|U^*_s|(n-|A|-|U|).$$
\end{enumerate}
\end{claim}

\begin{proof}
By symmetry, we just need to prove (1).

For $i \in [l_R]$ with $A_i\cap U\not=\emptyset$, in particular, suppose $A_i\cap U_s\not=\emptyset$ with $s \in [l_B]$. By Corollary \ref{AD UY empty}(1),  we have $D_i\cap Y_s=\emptyset$, therefore $D_i-U\subseteq X_s$, hence $|D_i-U|\le |X_s|\le k-1$. We have $|C_i-U|=\sum_{j=i+1}^{l_R+1}|A_j-U|-|D_i-U|\ge \sum_{j=i+1}^{l_R}|A_j-U|+|V-A-U|-(k-1)\ge \sum_{j=i+1}^{l_R}|A_j-U|-\tau$.

Let $i_0=\max\{i\in I^*: \sum_{j=i}^{l_R}|A_j-U|>\tau\}$, and let $a=\sum_{j=i_0}^{l_R}|A_j-U|-\tau$. Let $I^*_{<i_0}=\{i\in I^*, i<i_0\}$.

We have $\sum_{i=1}^{l_R}|A_i-U||C_i-U|\ge \sum_{i\in I^*_{<i_0}}|A_i-U|(\sum_{i<j<i_0, j\in I^*}|A_j-U|+a)=\sum_{i, j \in I^*_{<i_0}, i<j}|A_i-U||A_j-U|+\sum_{i\in I^*_{<i_0}}|A_i-U|a$.
So

\begin{align*}
&\sum_{i=1}^{l_R}(|A^*_i|(k-1-|A_i|)+\frac{1}{2}|A^*_i|^2+|A_i-U||C_i-U|)\\
\ge &\sum_{i=1}^{l_R}(|A^*_i|(k-1-|A_i\cap U|))+\frac{1}{2}\sum_{i=1}^{l^R}|A_i^*|^2-\sum_{i=1}^{l^R}|A^*_i||A_i-U|\\
&+\sum_{i,j\in I^*_{<i_0}, i<j}|A_i-U||A_j-U|+\sum_{i\in I^*_{<i_0}}|A_i-U|a\\
\ge &\sum_{i=1}^{l_R}(|A^*_i|(k-1-|A_i\cap U|))+\frac{1}{2}\sum_{i=1}^{l_R}|A_i^*|^2-\sum_{i=i_0}^{l_R}|A^*_i||A_i-U|\\
& +\sum_{i \in I^*_{<i_0}}(a-|A^*_i|)|A_i-U|+\sum_{i,j\in I^*_{<i_0}, i<j}|A_i-U||A_j-U|
\end{align*}

Let $f=\sum_{I^*_{<i_0}}(a-|A^*_i|)|A_i-U|+\sum_{i,j\in I^*_{<i_0}, i<j}|A_i-U||A_j-U|$. We consider $f$ as a function of $\{|A_i-U|:i\in I^*_{<i_0}\}$, where $|A_i-U|$ is range from 0 to $k-1-|A_i\cap U|$. Note that $f$ is linear for each $|A_i-U|$ with $1\le i<i_0$, it should achieve its extremal value at its end points. In particular, when $f$ achieve its minimal value, we should have $|A_i-U|=0$ if $\frac{\partial f}{\partial |A_i-U|}>0$ and $|A_i-U|=k-1-|A_i\cap U|$ if $\frac{\partial f}{\partial |A_i-U|}<0$. Suppose $|A_{i_1}-U|=k-1-|A_{i_1}\cap U|$, then for $i\not=i_1$, since $\frac{\partial f}{\partial |A_i-U|}=a-|A_i^*|+\sum_{j\not=i, j<i_0}|A_j-U|\ge (k-1-|A_{i_1}\cap U|)-|A_i^*|\ge k-1-|A\cap U|>0$, we should have $|A_i-U|=0$ for all $1\le i\le i_0$ with $i\not=i_1$. Therefore we have $f\ge (a-|A^*_{i_1}|)(k-1-|A_{i_1}\cap U|)$. If there is no such an $i_1$, we have $|A_i-U|=0$ for all $1\le i<i_0$, hence $f\ge 0$.  

Case 1. There exists such an $i_1$. We have 
\begin{align*}
&\sum_{i=1}^{l_R}(|A^*_i|(k-1-|A_i|)+\frac{1}{2}|A^*_i|^2+|A_i-U||C_i-U|)\\
\ge &\sum_{i=1}^{l_R}|A^*_i|(k-1-|A_i\cap U|)+\frac{1}{2}\sum_{i=1}^{l_R}|A_i^*|^2-\sum_{i=i_0}^{l_R}|A^*_i||A_i-U|+f\\
\ge &\sum_{i=1}^{l_R}|A^*_i|(k-1-|A_i\cap U|)+\frac{1}{2}\sum_{i=1}^{l_R}|A_i^*|^2-\sum_{i=i_0}^{l_R}|A^*_i||A_i-U|+(a-|A^*_{i_1}|)(k-1-|A_{i_1}\cap U|)\\
\ge &\sum_{i\not=i_1}|A^*_i|(k-1-|A_i\cap U|)+\frac{1}{2}\sum_{i=1}^{l_R}|A_i^*|^2-\sum_{i=i_0}^{l_R}|A^*_i|(a+\tau)+a(k-1-|A_{i_1}\cap U|)\\
\ge &\sum_{i\not=i_1}|A^*_i|(|V-A-U|-|A\cap U|+\sum_{j\not= i}|A_j\cap U|))+\frac{1}{2}\sum_{i=1}^{l_R}|A_i^*|^2+a(k-1-|A\cap U|)\\
\ge &\sum_{i\not=i_1}|A^*_i|(|V-A-U|-|A\cap U|+\sum_{j\not= i}|A_j\cap U|)+\frac{1}{2}\sum_{i=1}^{l_R}|A_i^*|^2\\
\ge &\sum_{i\not=i_1}|A^*_i|(n-|A|-|U|)+\sum_{1\le i<j\le l_R}|A_i^*||A_j^*|+\frac{1}{2}\sum_{i=1}^{l_R}|A_i^*|^2\\
= & \frac{1}{2}|A^*|^2+\sum_{i\not=i_1}|A^*_i|(n-|A|-|U|).
\end{align*}

Case 2. Suppose there is no such an $i_1$, thus $f\ge 0$. Let $i_1=i_0$. We have 
\begin{align*}
&\sum_{i=1}^{l_R}(|A^*_i|(k-1-|A_i|)+\frac{1}{2}|A^*_i|^2+|A_i-U||C_i-U|)\\
\ge &\sum_{i=1}^{l_R}|A^*_i|(k-1-|A_i\cap U|)+\frac{1}{2}\sum_{i=1}^{l_R}|A_i^*|^2-\sum_{i=i_0}^{l_R}|A^*_i||A_i-U|+f\\
\ge &\sum_{i=1}^{l_R}|A^*_i|(k-1-|A_i\cap U|)+\frac{1}{2}\sum_{i=1}^{l_R}|A_i^*|^2-\sum_{i\not=i_0}|A^*_i|\tau-|A_{i_0}^*||A_{i_0}-U|\\
\ge &\sum_{i\not=i_0}|A^*_i|(k-1-\tau-|A_i\cap U|)+\frac{1}{2}\sum_{i=1}^{l_R}|A_i^*|^2+|A^*_{i_0}|(k-1-|A_{i_0}|)\\
\ge &\sum_{i\not=i_0}|A^*_i|(|V-A-U|-|A\cap U|+\sum_{j\not= i}|A_j\cap U|)+\frac{1}{2}\sum_{i=1}^{l_R}|A_i^*|^2\\
\ge &\sum_{i\not=i_0}|A^*_i|(n-|A|-|U|)+\sum_{1\le i<j\le l_R}|A_i^*||A_j^*|+\frac{1}{2}\sum_{i=1}^{l_R}|A_i^*|^2\\
= & \frac{1}{2}|A^*|^2+\sum_{i\not=i_1}|A^*_i|(n-|A|-|U|).
\end{align*}
\end{proof}

\begin{claim}
$|I^*|=1, |S^*|=1, |A|=|U|=2k-1$, and $|A\cap U|\le \lambda$.
\end{claim}
\begin{proof}
We have 
\begin{align*}
&\lambda|A\cap U|\\
\ge & (2k-1)+ \frac{1}{2}|A\cap U|^2
+(|A|+|U|-4k+2))(k-|A\cap U|)\\
&+\frac{1}{2}|A^*|^2+\frac{1}{2}|U^*|^2+\sum_{i\not=i_1}|A^*_i|(n-|A|-|U|)+\sum_{s\not=s_1}|U^*_s|(n-|A|-|U|)\\
\ge& (2k-1)+ \frac{1}{2}|A\cap U|^2+\frac{1}{4}|A\cap U|^2\\
&+(|A|+|U|-4k+2))(k-|A\cap U|)+(n-|A|-|U|)(\sum_{i\not=i_1}|A^*_i|+\sum_{s\not=s_1}|U^*_s|).
\end{align*}
If $|A|+|U|-4k+2\ge 1$, we have 
$$\lambda|A\cap U|\ge (2k-1)+ \frac{3}{4}|A\cap U|^2+k-|A\cap U|.$$
Therefore $(\lambda+1)^2-3(3k-1)\ge 0$, we will have $\lambda\ge \sqrt{9k-3}-1> \sqrt{8k-\frac{31}{4}}-\frac{1}{2}$ for all positive integer $k$. So we have $|A|=|U|=2k-1$.

If $|I^*|\ge 2$ or $|S^*|\ge 2$, we will have 
$$\lambda|A\cap U|\ge (2k-1)+ \frac{3}{4}|A\cap U|^2+(5k-3-\lambda-(4k-2))=3k-2-\lambda+\frac{3}{4}|A\cap U|^2.$$
We will have $\lambda^2-3(3k-2-\lambda)\ge 0.$ Hence $\lambda\ge \sqrt{9k-\frac{15}{4}}+\frac{3}{2}> \sqrt{8k-\frac{31}{4}}-\frac{1}{2}$ for all positive integer $k$. 

Furthermore, if one of $I^*$ and $U^*$ is empty, we will have
$\lambda|A\cap U|
\ge  (2k-1)+ \frac{1}{2}|A\cap U|^2
+\frac{1}{2}(|A^*|^2+|U^*|^2)=2k-1+|A\cap U|^2$.
 We will have $\lambda^2-4(2k-1)\ge 0.$ Hence $\lambda\ge \sqrt{8k-4}> \sqrt{8k-\frac{31}{4}}-\frac{1}{2}$ for all positive integer $k$. 
Thus we should have $|I^*|=|S^*|=1$.

Furthermore, we have
$$\lambda |A \cap U|\ge (2k-1)  + \frac{3}{4}|A \cap U|^2.$$

As $\lambda<\sqrt{8k-\frac{31}{4}}-\frac{1}{2}<2\sqrt{2k-1}$, we will have 
$$0\ge \lambda^2-4\lambda|A \cap U| + 3|A \cap U|^2=(|A\cap U|-\lambda)(3|A\cap U|-\lambda).$$
Therefore $|A \cap U|<\lambda$.
\end{proof}\begin{claim}\label{I=S=1}
If $|I^*|=|S^*|=1$, then we must have $n \le 5k-2.5-\sqrt{8k-\frac{31}{4}}$. 
\end{claim}

We will leave the proof of Claim \ref{I=S=1} to the next section, 
which will complete the proof of Theorem \ref{main} (1).

\section{The case when $|I^*|=|S^*|=1$}
Assume $I^*=\{j\}$ and $S^*=\{t\}$. We have $A^*=A_j^*, U^*=U_t^*$ and $A\cap U=A_j^*\sqcup U_j^*$. We have 
\begin{align*}
0\ge & (2k-1)-\lambda(|A^*|+|U^*|)+|A^*|^2+|A^*||U^*|+|U^*|^2+ |A^*|(k-1-|A_j|)\\
&+|U^*|(k-1-|U_t|)+|A_j-U||C_j-A|+|U_t-A||X_t-U|.
\end{align*}

We say $x>_R y$ if $x\in A_i$ and $y\in A_{i'}$ and $i>i'$. A set $X>_R Y$ if $\forall x\in X, y\in Y$ we have $x>_R y$. Similarly we define $\ge_R$, $>_B$, $\ge_B$.

As we assume $\lambda < \sqrt{8k-\frac{31}{4}}-\frac{1}{2}$, we have $\lambda^2+\lambda\le 8k-4$, hence $2k-1>\frac{\lambda^2}{4}$. Note that by Corollary \ref{AD UY empty}(2), either $U_t\cap D_j=\emptyset$ or $A_j\cap Y_t=\emptyset$.

\begin{claim}\label{CLM:UtinCj}
We have the following proposition:
\begin{enumerate}
    \item If $U_t\cap D_j=\emptyset$ then $U_t\subseteq C_j$. If $A_j\cap Y_t=\emptyset$, then $A_j\subseteq X_t$. In particular, we always have $A_j\cap U_t=\emptyset$, $A^*=A_j\cap U$, $U^*=U_t\cap A$. 
    \item $C_j\subseteq U, X_t\subseteq A$.
    \item $V-A-U\subseteq D_j\cap Y_t$. 
\end{enumerate}
\end{claim}
\begin{proof}
(1) By symmetric, We just need to consider the case that $U_t\cap D_j=\emptyset$. By the definition of $A_j^*$ and $U_t^*$, we may assume $A_j\cap U_t$ is a subset of $A_j^*$ if it is not empty, hence $A^*=A_j\cap U$. If there is a vertex $v\in A_i\cap U_t$ with $i<j$, then $R=D_i\cap ((A_j-U)\cup (V-A-U))$ has size at least $|A_j-U|+|V-A-U|-|C_j|$. The edges between $v$ and $R$ are $AD$-type and therefore should be in $B$, and hence $R\subseteq X_t-U$. 
So $|X_t-U|\ge |A_j-U|+|V-A-U|-|C_j|\ge |A_j|-(|A \cap U|-|U^*|)+n-|A\cup U|-(k-1)=|A_j|+|U^*|+(4k-2-\lambda)-|A|-|U|=|A_j|+|U^*|-\lambda$.

We have 
\begin{align*}
0\ge & (2k-1)-\lambda(|A^*|+|U^*|)+|A^*|^2+|A^*||U^*|+|U^*|^2+ |A^*|(k-1-|A_j|)\\
&+|U^*|(k-1-|U_t|)+|U_t-A||X_t-U|.
\end{align*}
If $|A_j|\le \lambda-|U^*|$, then we will have 
\begin{align*}
0\ge & (2k-1)-\lambda(|A^*|+|U^*|)+|A^*|^2+|A^*||U^*|+|U^*|^2+ |A^*|(k-1-\lambda+|U^*|)\\
> & \frac{\lambda^2}{4}-\lambda(|A^*|+|U^*|)+|A^*|^2+2|A^*||U^*|+|U^*|^2\\
= & (|A^*|+|U^*|-\frac{\lambda}{2})^2>0
\end{align*}
Contradiction! We have 
\begin{align*}
0\ge & (2k-1)-\lambda(|A^*|+|U^*|)+|A^*|^2+|A^*||U^*|+|U^*|^2+ |A^*|(k-1-|A_j|)\\
&+|U^*|(k-1-|U_t|)+|U_t-A|(|A_j|+|U^*|-\lambda).
\end{align*}
The right hand side above can be consider as a linear function of $|A_j|$, with range from $\lambda-|U^*|$ to $k-1$, hence it is minimality should achieved at its endpoint. So we just need to consider the case $|A_j|=k-1$. We have
\begin{align*}
0\ge & (2k-1)-\lambda(|A^*|+|U^*|)+|A^*|^2+|A^*||U^*|+|U^*|^2+|U^*|(k-1-|U_t|)\\
+&|U_t-U_t\cap A_j-U^*|(k-1+|U^*|-\lambda).
\end{align*}
If $|U_t|\le |A^*|+|U^*|$, then 
\begin{align*}
0\ge & (2k-1)-\lambda(|A^*|+|U^*|)+|A^*|^2+|A^*||U^*|+|U^*|^2+|U^*|(k-1-|A^*|-|U^*|)\\
> & \frac{\lambda^2}{4}-\lambda|A^*|+|A^*|^2+(k-1-\lambda)|U^*|\\
\ge& (|A^*|-\frac{\lambda}{2})^2\ge 0
\end{align*}
Contradiction! We have
\begin{align*}
0\ge & (2k-1)-\lambda(|A^*|+|U^*|)+|A^*|^2+|A^*||U^*|+|U^*|^2+|U^*|(k-1-|U_t|)\\
+&(|U_t|-|A^*|-|U^*|)(k-1+|U^*|-\lambda).
\end{align*}
Again, The right hand side above can be consider as a linear function of $|U_t|$, with range from $|A^*|+|U^*|$ to $k-1$, hence it is minimality should achieved at its endpoint. So we just need to consider the case $|U_t|=k-1$. We have
\begin{align*}
0\ge & (2k-1)-\lambda(|A^*|+|U^*|)+|A^*|^2+|A^*||U^*|+|U^*|^2\\
&+(k-1-|A^*|-|U^*|)(k-1-\lambda+|U^*|)\\
\ge & (2k-1)+(k-1)(k-1-\lambda)-(k-1-2\lambda)|A^*|+|A^*|^2
\end{align*}
So we should have $(k-1-2\lambda)^2\ge 4(2k-1)+4(k-1)(k-1-\lambda)$, so
$$0\ge 3(k-1)^2+4(2k-1)-4\lambda^2\ge 3(k-1)^2+3(2k-1)>0.$$ Contradiction.

From above, we should have  every vertex $v \in U_t$, $v\ge_R A_j$.

If there exists $v\in A_j\cap U_t\not=\emptyset$, but there is no $AD$-type edge of $v$ in $U_t$, which contradict to Proposition~\ref{basic prop}(2) that $U_t$ is connected in $G_{\overline{R}}$. So $U_t>_R A_j$, therefore $U_t\subset C_j$. Hence $A_j\cap U_t=\emptyset$. 

(2) As $A_j\cap U_t=\emptyset$, and $A\cap U=A^*_j\sqcup U^*_t$, we have $A^*_j=A_j\cap U$ and $U^*_t=U_t\cap A$.

If $|C_j-U|\ge 1$, we will have 
$|A^*|(k-1-|A_j|)+|A_j-U||C_j-U|\ge |A^*|(k-1-|A_j|)+|A_j|-|A^*|\ge k-1-|A^*|$.
Hence we have 
\begin{align*}
0\ge &2k-1-\lambda(|A^*|+|U^*|)+|A^*|^2+|U^*|^2 +|A^*||U^*|+k-1-|A^*|\\
=&\frac{1}{4}\lambda^2-(\frac{1}{2}|A^*|+|U^*|)\lambda+(\frac{1}{2}|A^*|+|U^*|)^2+\frac{3}{4}|A^*|^2-(1+\frac{1}{2}\lambda)|A^*|+3k-2-\frac{1}{4}\lambda^2\\
=&(\frac{1}{2}\lambda-(\frac{1}{2}|A^*|+|U^*|))^2+\frac{3}{4}(|A^*|-\frac{2}{3}-\frac{1}{3}\lambda)^2+3k-2-\frac{1}{4}\lambda^2-\frac{3}{4}(\frac{2}{3}+\frac{1}{3}\lambda)^2\\
\ge& 3k-2-\frac{1}{3}(\lambda^2+\lambda+1)\\
\ge& 3k-2-\frac{1}{3}(8k-7)\\
=&\frac{k}{3}+\frac{1}{3}
>0
\end{align*}
Contradiction! So we have $C_j\subseteq U$. Similarly, we have $X_t\subseteq A$. 

(3) is implied by (2) immediately. 
\end{proof}

Recall that $n=5k-3-\lambda$, $|V-A-U|=k-1-\tau$. Let $\omega=|A_j\cap U|=|A^*|$, $\sigma=|U_t\cap A|=|U^*|$, then $|A\cap U|=\omega+\sigma$. $n=|A|+|U|+|V- A- U|-|A\cap U|=2k-1+2k-1+k-1-\tau-(\omega+\sigma)=5k-3-(\tau+\omega+\sigma)$. So $\lambda=5k-3-n=\tau+\omega+\sigma$. 

Let $A'=A-A_j-(U_t\cap A)$, $U'=U-U_t-(A_j\cap U)$.
\begin{claim}
$|A_j|\ge k-\sigma$, $|U_t|\ge k-\omega$, $|A'|\le k-1$, $|U'|\le k-1$.   
\end{claim}
\begin{proof}
\begin{align*}
0>&\frac{\lambda^2}{4}-\lambda(\omega+\sigma)+\omega^2+\omega\sigma+\sigma^2+\omega(k-1-|A_j|)+\sigma(k-1-|U_t|)\\
=& (\frac{1}{2}\lambda-(\omega+\sigma))^2-\omega\sigma+\omega(k-1-|A_j|)+\sigma(k-1-|U_t|)\\
\ge&\omega(k-1-|A_j|)+\sigma(k-1-|U_t|)-\omega\sigma.
\end{align*}

We have $k-1-|A_j|<\sigma$, hence $|A_j|\ge k-\sigma$. $|A'|=|A|-|A_j|-|U_t\cap A|\le 2k-1-(k-\sigma)-\sigma=k-1$. Similarly we have $|U_t|\ge k-\omega$, $|U'|\le k-1$.
\end{proof}

By the following observation, we may assume $\tau(\omega+\sigma)+\frac{\omega+\sigma}{2}< 2k-2$.
\begin{obs}     
     If $\tau(\omega+\sigma)+\frac{\omega+\sigma}{2}\ge 2k-2$, then $\lambda=\tau+\omega+\sigma\ge \sqrt{8k-\frac{31}{4}}-\frac{1}{2}$.
\end{obs}
\begin{proof}
Since $\tau, \omega, \sigma$ are all integers, we have 
 $$(\tau+\frac{1}{2}+(\omega+\sigma))^2=4(\tau(\omega+\sigma)+\frac{\omega+\sigma}{2})+(\tau+\frac{1}{2}-(\omega+\sigma))^2\ge 4(2k-2)+\frac{1}{4}=8k-\frac{31}{4}.$$ Therefore $\tau+\omega+\sigma\ge \sqrt{8k-\frac{31}{4}}-\frac{1}{2}$.
\end{proof}

Given a vertex $v\in A_i$ with $1\le i\le l_R$, let $R(v)$ be the edges between $v$ and $C_i$. For a set $S\subseteq A$, let $R(S)=\cup_{v\in S}R(v)$. Similarly we define $B(v)$ and $B(S)$ for $v\in U, S\subseteq U$ in the blue graph.

\begin{center}
\resizebox{8.5cm}{!}
{
\begin{tikzpicture}

\draw (-5,2.5) ellipse (1.5 and 1);
\node at (-5,2.6) {$V-A-U$};

\filldraw  [fill = gray!50](-8.15,0.5) ellipse (0.75 and 0.5);
\node at (-8.15,0.5) {$U_t \cap A$};
\draw  (-7.6,0.35) ellipse (1.5 and 1);
\node at (-6.2,-0.45) {$C_j$};
\draw  plot[smooth] coordinates {(-7.3,1.35) (-7.3,-0.65)};
\node at (-7.7,-0.35) {$U_t$};
\node at (-6.7,0.2) {$C_j \cap U'$};


\draw (-2.45,0.35) ellipse (1.5 and 1);
\node at (-0.6,-0.45) {$U'-C_j$};

\draw  (-6.7,-2.5) ellipse (1.5 and 1);
\node at (-5.5,-1.55) {$A'$};

\fill  [fill = gray!50](-3.3,-2.5) ellipse (1.5 and 1);
\fill  [fill = white] (-3,-3.6) rectangle (-4.9,-1.4);
\draw  (-3.3,-2.5) ellipse (1.5 and 1);
\node at (-2.1,-1.55) {$A_{j}$};
\draw plot[smooth] coordinates {(-3,-1.5) (-3,-3.5)};
\node at (-2.4,-2.5) {$A_j \cap U$};

\filldraw [fill = gray!50] (0.5,2.2) rectangle (1.5,2.8);
\node at (2.5, 2.5) {$A \cap U$};

\end{tikzpicture}
}
\end{center}

W.L.O.G., we assume $U_t\cap D_j=\emptyset$. By Claim~\ref{CLM:UtinCj}, we have $U_t\subseteq C_j\subseteq U$. We have the following claim.
\begin{claim}\label{CLM:DBCount}
$|R(A)|+|B(A_j\cap U)|+|B(U'-C_j)|\ge |E(A',A_j\cap U)|+|E(A_j,C_j)|+|E(A,U'-C_j)|+|E(A_j\cap U,V-A-U)|+|E(U_t\cap A, V-A-U)|$.
\end{claim}
\begin{proof}
We have $U'-C_j=U-U_t-(A_j\cap U)-C_j=U-C_j-(A_j\cap U)$. Hence $U=(A_j\cap U)\sqcup (U'-C_j)\sqcup C_j$. The left hand side count all the $AC$-type edges and $UX$-type edges except $B(C_j)$. All edges of $E(A',A_j\cap U)$, $E(A_j,C_j)$, $E(A, U'-C_j)$, $E(A_j\cap U,V-A-U)$ and $E(U_t\cap A, V-A-U)$ are disjoint and either $AC$-type or $UX$-type (as they are not $AA$-type or $UU$-type, with only possible exceptions in $E(A_j, C_j)$, which are $AC$-type), but does not contain edges in $B(C_j)$. Here the only non-trivial case are for edges between $U_t\cap A$ and $U'-C_j$ in $E(A, U'-C_j)$ and edges in $E(U_t\cap A, V-A-U)$, for which case we use the fact that $X_t\subseteq A$. Hence we have the above claim.
\end{proof}

\begin{claim}
We have the following:
\begin{enumerate}
    \item $\sigma\tau+\omega\tau+\omega\ge 2k-1.$
    \item $|A'|=k-1$.
\end{enumerate}

\end{claim}
\begin{proof}
By definition, $E(A_j,C_j)=R(A_j)$, $A=A'\sqcup A_j\sqcup (U_t\cap A)$, so we have 
$|R(A)|-|E(A_j,C_j)|=|R(A')|+|R(U_t\cap A)|$.

$|R(U_t\cap A)|-|E(U_t\cap A,V-A-U)|=\sigma(k-1-|V-A-U|)=\sigma\tau$. $|B(A_j\cap U)|-|E(A_j\cap U,V-A-U)|=\omega(k-1-|V-A-U|)=\omega\tau$.

Since $U_t\subset C_j\subset U$ and $U=U_t\sqcup (A_j\cap U)\sqcup U'$, we have $|U'-C_j|=|U|-|A_j\cap U|-|C_j|=2k-1-\omega-(k-1)=k-\omega$.

By the claim~\ref{CLM:DBCount}, we have $(|R(A)|-|E(A_j,C_j)|-|E(U_t\cap A, V-A-U)|)+(|B(A_j\cap U)|-|E(A_j\cap U,V-A-U)|)
\ge |E(A',A_j\cap U)|+(|E(A,U'-C_j)|-B(U'-C_j)|)$.

That is,
$$|R(A')|+\sigma\tau+\omega\tau\ge |A'|\omega+(2k-1-(k-1))|U'-C_j|.$$
Therefore
$$|A'|(k-1-\omega)+\sigma\tau+\omega\tau\ge (k-\omega)k.$$
As $|A'|\le k-1$, we have $$(k-1)(k-1-\omega)+\sigma\tau+\omega\tau\ge k(k-\omega),$$ 
that is
$$\sigma\tau+\omega\tau+\omega\ge 2k-1.$$
If $|A'|\le k-2$, we will have 
$$(k-2)(k-1-\omega)+\sigma\tau+\omega\tau\ge k(k-\omega),$$ that is $\sigma\tau+\omega\tau+2\omega\ge 3k-2$, hence $$\sigma+\omega+\tau+2\ge 2\sqrt{(\sigma+\omega)(\tau+2)}\ge 2\sqrt{3k-2}.$$ 
However $$\sigma+\omega+\tau+2=\lambda+2<\sqrt{8k-\frac{31}{4}}+1.5\le 2\sqrt{3k-2}.$$ Contradiction. So we should have $|A'|=k-1$.

\end{proof}

\begin{claim}
$\sigma\tau+\tau+\omega\ge k$.
\end{claim}
\begin{proof}
Among vertices in $A_j\cap U$, let $z\in U_s$ be the one with minimum $s$. Since $C_j\subset U$, we have $V-A-U\subseteq D_j$, hence by Proposition~\ref{basic prop}, $V-A-U\subset X_s$. For every vertex $u$ in $(U'-C_j)\cap U_{>s}$, since $uz$ is not in $R$, we have $u \in X_s$. 
So there are at most $k-1-|V-A-U|=\tau$ vertices of $(U'-C_j)\cap U_{>s}$. Therefore there are at least $|U'-C_j|-\tau=k-\omega-\tau$ vertices in $(U'-C_j)\cap U_{\le s}$. 

Since $V-A-U\subseteq Y_t$, $E(A\cap U_t, V-A-U)\subseteq R(A\cap U_t)$, there are at most $\sigma\tau$ in $R(A\cap U_t)-E(A\cap U_t, V-A-U)$. 

If $k-\omega-\tau>\sigma\tau$, there is at least one vertex $u$ in $(U'-C_j)\cap U_{\le s}$ with no edge in $R(A\cap U_t)$. Note that $u\in U'-C_j\subseteq V-A$, all the edges between $u$ and $A\cap U_t$ should be in B. Note that $X_t\subseteq A$, we have $u\not\in X_t$. 
Suppose $u\in U_{s'}$ for some $s'\le s$, we have $A\cap U_t\subseteq X_{s'}$. As $u\in U'-C_j$, the edges between $u$ and $A_j$ are all in $B$ and $A_j\ge_B u$. If $s'<s$, we also have $A_j\subseteq X_{s'}$. But $|A_j\cup (A\cap U_t)|=2k-1-|A'|=k>|X_{s'}|$.
If $s'=s$, then $(V-A-U)\cup (A_j-U_s)\cup (A\cap U_t)\subseteq X_s$, we have $|X_s|\ge |A_j|+|A\cap U_t|+|V-A-U|-|A_j\cap U_s|\ge k+k-\tau-\omega> k$. Contradiction!
So we must have $\sigma\tau+\tau+\omega\ge k$.
\end{proof}

Now we are ready to complete the proof that $n=5k-3-(\omega+\sigma+\tau)\le 5k-2.5-\sqrt{8k-\frac{31}{4}}$.
\begin{proof}[Proof of Claim~\ref{I=S=1}]

If $\sigma\ge \omega-2$, by $\sigma\tau+\omega\tau+\omega\ge 2k-1.$, we will have $(\sigma+\omega)(\tau+\frac{1}{2})\ge 2k-1-\frac{\omega-\sigma}{2}=2k-2$, hence $(\sigma+\omega+\tau+\frac{1}{2})^2\ge 8k-4+(\sigma+\omega-\tau-\frac{1}{2})^2\ge 8k-\frac{31}{4}$.
Then we are done.

If $\sigma\le \omega-3$, since $\sigma\tau+\tau+\omega\ge k$, we have  
$$2k\le (\omega+\sigma-1)\tau+(\omega+\sigma-1)+4-(\omega-\sigma-3)(\tau-1)\le (\omega+\sigma-1)(\tau+1)+4.$$
This will implies $\omega+\sigma+\tau\ge \sqrt{8k-16}\ge \sqrt{8k-\frac{31}{4}}-\frac{1}{2}$. The last inequality is ture when $k\ge 10$.

So we always have $\omega+\sigma+\tau\ge \sqrt{8k-\frac{31}{4}}-\frac{1}{2}$, and $n=5k-3-(\omega+\sigma+\tau)\le 5k-2.5-\sqrt{8k-\frac{31}{4}}$.

\end{proof}

\begin{center}
\resizebox{13.5cm}{!}
{
\begin{tikzpicture}

\filldraw [fill = gray!50] (-5,2.5) ellipse (1.5 and 1);
\node at (-5,2.6) {$V-A-U$};
\node at (-5,2.2) {$k-1-\tau$};

\fill [fill = gray!50] (-7.6,0.35) ellipse (1.5 and 1);
\fill [fill = white] (-7.3,1.35) rectangle (-6.1,-0.65); 
\draw (-7.6,0.35) ellipse (1.5 and 1);
\draw (-7.6,0.35) ellipse (1.5 and 1);
\node at (-6.2,-0.45) {$U_{t}$};
\draw  plot[smooth] coordinates {(-7.3,1.35) (-7.3,-0.65)};
\node at (-6.7,0.35) {$U_t \cap A$};
\node at (-6.75,-0.05) {$\sigma$};
\node at (-8.1,0.05) {$k-1-\sigma$};

\filldraw [fill = gray!50] (-2.45,0.35) ellipse (1.5 and 1);
\node at (-1,-0.45) {$U'$};
\draw  plot[smooth] coordinates {(-2.1,1.35) (-2.1,-0.65)};
\node at (-2.9,0.35) {$U'_{a}$};
\node at (-2.9,-0.05) {$\sigma\tau$};
\node at (-1.65,0.6) {$U'_{b}$};
\node at (-1.55,0.25) {$k-\omega$};
\node at (-1.65,-0.1) {$-\sigma\tau$};

\draw  (-6.7,-2.5) ellipse (1.5 and 1);
\node at (-5.5,-1.55) {$A'$};
\node at (-6.7,-2.8) {$k-1$};

\filldraw [fill = gray!50] (-3.3,-2.5) ellipse (1.5 and 1);
\node at (-2.1,-1.55) {$A_{k}$};
\draw  plot[smooth] coordinates {(-3,-1.5) (-3,-3.5)};
\node at (-3.8,-2.8) {$k-\sigma-\omega$};
\node at (-2.4,-2.5) {$A_k \cap U$};
\node at (-2.4,-2.9) {$\omega$};
\fill (-2.3, -2) circle(0.05);
\node at (-2.6,-2) {$z$};

\draw  plot[smooth] coordinates {(-6.5,2.5) (-7.3,1.35)}; 
\draw  plot[smooth] coordinates {(-3.5,2.5) (-2.1,1.35)}; 
\draw  plot[smooth, tension = 1.5] coordinates {(-8.6,1.1) (-5.2,4) (-2.1,1.35)}; 
\draw[dashed]  plot[smooth] coordinates {(-6.15,0.55) (-3.9,0.55)}; 
\node at (-5.9,0.75) {$\tau$};
\node at (-4.1,0.75) {$1$};
\draw  plot[smooth] coordinates {(-7.3,-0.65) (-3,-1.5)};
\draw  plot[smooth] coordinates {(-2.1,-0.65) (-7,-1.5)};
\draw[dashed]  plot[smooth, tension = 1.7] coordinates {(-7.7,-3.25) (-5,-4) (-2.3,-3.25)};
\node at (-7.4,-4) {$\omega-1$};
\node at (-2.5,-4) {$k-1-\tau$};
\draw[dashed]  plot[smooth, tension = 1] coordinates {(-2.3,-2) (-0.8,-3.05) (-2,-4.5) (-5,-4)};
\node at (-2,-4.8) {$(\omega-1)\tau$};

\node at (-5,-4.7) {\LARGE $R$};

\draw [fill = gray!50] (5,2.5) ellipse (1.5 and 1);
\node at (5,2.6) {$V-A-U$};
\node at (5,2.2) {$k-1-\tau$};

\filldraw [fill = gray!50]  (2.4,0.35) ellipse (1.5 and 1);
\node at (3.8,-0.45) {$U_{t}$};
\draw  plot[smooth] coordinates {(2.7,1.35) (2.7,-0.65)};
\node at (3.3,0.35) {$U_t \cap A$};
\node at (3.25,-0.05) {$\sigma$};
\node at (1.9,0.05) {$k-1-\sigma$};

\draw  (7.55,0.35) ellipse (1.5 and 1);
\node at (9,-0.45) {$U'$};
\draw  plot[smooth] coordinates {(7.9,1.35) (7.9,-0.65)};
\node at (7.1,0.35) {$U'_{a}$};
\node at (7.1,-0.05) {$\sigma\tau$};
\node at (8.35,0.6) {$U'_{b}$};
\node at (8.45,0.25) {$k-\omega$};
\node at (8.35,-0.1) {$-\sigma\tau$};

\filldraw [fill = gray!50] (3.3,-2.5) ellipse (1.5 and 1);
\node at (4.5,-1.55) {$A'$};
\node at (3.3,-2.8) {$k-1$};

\fill [fill = gray!50] (6.7,-2.5) ellipse (1.5 and 1);
\fill [fill = white] (7,-1.5) rectangle (8.2,-3.5); 
\draw  (6.7,-2.5) ellipse (1.5 and 1);
\node at (7.9,-1.55) {$A_{k}$};
\draw  plot[smooth] coordinates {(7,-1.5) (7,-3.5)};
\node at (6.2,-2.8) {$k-\sigma-\omega$};
\node at (7.6,-2.5) {$A_k \cap U$};
\node at (7.6,-2.9) {$\omega$};
\fill (7.7, -2) circle(0.05);
\node at (7.4,-2) {$z$};

\draw  plot[smooth, tension = 1] coordinates {(4.8,1.5) (4.2,-0.3) (3.7,-1.55)}; 
\draw  plot[smooth, tension = 1] coordinates {(5.2,1.5) (6,-0.3) (7,-1.5)}; 
\draw  plot[smooth] coordinates {(4.5,-1.9) (5.4,-1.9)}; 
\draw  plot[smooth, tension = 1.5] coordinates {(2.9,1.3) (5.2,4) (8.55,1.1)}; 
\draw[dashed]  plot[smooth, tension = 1.5] coordinates {(3,1.3) (5,3.8) (7.55,1.35)}; 
\node at (3.65,1.5) {$\sigma\tau-\tau$};
\node at (7.3,1.6) {$\sigma-1$};
\draw  plot[smooth] coordinates {(2.1,-0.65) (3,-1.5)};
\draw  plot[smooth] coordinates {(7.9,-0.65) (7,-1.5)};
\draw[dashed]  plot[smooth, tension = 1.7] coordinates {(2.3,-3.25) (5,-4) (7.7,-3.25)};
\node at (2.6,-4) {$1$};
\node at (7.1,-4) {$\tau$};
\draw[dashed]  plot[smooth, tension = 1] coordinates {(7.7,-2) (9.2,-3.05) (8,-4.5) (5,-4)};
\node at (8,-4.8) {$k-1-(\omega-1)\tau$};

\node at (5,-4.7) {\LARGE $B$};

\draw (-4.5,-5.7) rectangle (-3.5,-6.3);
\node at (-2.3, -6) {independent};
\filldraw [fill = gray!50]  (-0.5,-5.7) rectangle (0.5,-6.3);
\node at (1.5, -6) {complete};
\node at (4.5, -6) {$z = a_k^{\omega} = u_{\sigma\tau+1}$};

\end{tikzpicture}
}
\end{center}

\section{The counterexample}

In this section, we illustrate a sharp example for Theorem \ref{main} (2). 
That is, for given integer $k$ and $n$, 
where $4k-3 \le n = \lfloor 5k-2.5-\sqrt{8k-\frac{31}{4}} \rfloor$,
we construct a 2-edge-colored $K_n$, 
which contains no $k$-connected monochromatic subgraph with at least $n-2k+2$ vertices. 
Note that we may assume $k \ge 6$, 
otherwise, the example with $n = 4k-4$ vertices, 
which was mentioned in \cite{BG08} and \cite{Mat83}, 
can serve as the counterexample we desire. 
Let $G = (V, E)$ be a complete graph on $n$ vertices. 
We demonstrate the coloring of our example 
by specifying and verifying the two strong $(2k-2,k)$-decompositions: 
$((A_i, C_i, D_i))_{i=1}^{l_R}$ for the red graph $G_R$, 
and $((U_s, X_s, Y_s))_{s=1}^{l_B}$ for the blue graph $G_B$. 
We then obtain $G_R$ and $G_B$ 
by coloring all AA-type and AC-type edges red, 
and all UU-type and UX-type edges blue. 
and justify that every edge in $G$ receives at least one color. 
Note that some of the edges might be colored red and blue simultaneously. 
The example was inspired by the case $|I^*| = |S^*| = 1$ in our proof. 

Suppose $5k-3-n=\lceil \sqrt{8k-\frac{31}{4}}-\frac{1}{2}\rceil=4x+b$, where $x, b\in \mathbb{Z}$ with $b\in \{-2,-1,0, 1\}$. That is, $x=\lceil\frac{ \sqrt{8k-\frac{31}{4}}-\frac{3}{2}}{4}\rceil$, and $b=5k-3-n-4x$. 
Note that $x \ge 2$, since $k \ge 6$. 
Let $\sigma=x-1, \omega=x+1, \tau=2x+b$. Then $5k-3-n=\sigma+\omega+\tau$.

We have $$k\le \frac{(4x+b+\frac{1}{2})^2+\frac{31}{4}}{8}=x(2x+b+\frac{1}{2})+\frac{(b+\frac{1}{2})^2+\frac{31}{4}}{8}\le x(2x+b+\frac{1}{2})+\frac{5}{4}.$$ 

Therefore $k\le \lfloor x(2x+b)+\frac{x}{2}+\frac{5}{4}\rfloor\le x(2x+b)+\frac{x}{2}+1=\frac{\sigma+\omega}{2}\tau+\frac{\omega+1}{2}.$ Furthermore $x(2x+b)+\frac{x}{2}+1\le x(2x+b)+x+1=(\sigma+1)\tau+\omega$. Hence 
we will have $\sigma\tau+\tau+\omega\ge k$ and $\sigma\tau+\omega\tau+\omega\ge 2k-1$.
Moreover, this implies $\omega\tau \ge k-1$ 
since $\omega\tau + 1 = \sigma\tau + 2\tau + 1 \ge \sigma\tau + \tau + \omega \ge k$. 

On the other hand, $\sqrt{8k-\frac{31}{4}}+\frac{1}{2}> 4x+b$. Therefore, when $x \ge 2$, 
$k>\frac{(4x+b-\frac{1}{2})^2+\frac{31}{4}}{8}=x(2x+b-\frac{1}{2})+\frac{(b-\frac{1}{2})^2+\frac{31}{4}}{8}=((x-1)(2x+b)+x)+\frac{x}{2}+b+\frac{b^2-b+8}{8}
= \sigma\tau+\omega-1+\frac{x}{2}+\frac{b^2+7b+8}{8}
= \sigma\tau+\omega+\frac{x}{2}+\frac{b^2+7b}{8}$. 
If  $(x, b) \ne (2, -2)$, 
then $k> \sigma\tau+\omega+\frac{x}{2}+\frac{b^2+7b}{8}
\ge \sigma\tau+\omega$. 
Futhermore, when $(x, b) = (2, -2)$, 
$k\ge6>5=\sigma\tau+\omega$. 
Hence we have $k \ge \sigma\tau+\omega+1$.

Let $t = k-\omega+2$. 
We first define four disjoint vertex sets: $A'$, $A_k$, $U'$, and $U_t$. 
Let $A' = \{a_1, \dots, a_{k-1}\}$, 
$A_k = \{a_k^1, \dots, a_k^{k-\sigma}\}$, 
$U' = U'_{a} \cup U'_{b}$ 
where $U'_{a} = \{u_1, \dots, u_{\sigma\tau}\}$ 
and $U'_{b} = \{u_{\sigma\tau+2}, \dots, u_{k-\omega+1}\}$, 
and $U_t = \{u_t^1, \dots, u_t^{k-1}\}$. 
Note that $U'_{b}$ is well-defined and non-empty 
since $k \ge \sigma\tau + \omega + 1$. 
Moreover, we let $A = A' \cup A_k \cup \{u_t^1, \dots, u_t^\sigma\}$, 
and $U = U' \cup U_t \cup \{a_k^1, \dots, a_k^\omega\}$. 
Here $A$ and $U$ are both well-defined, 
since $\sigma \le k-1$, and $\omega \le k-\sigma\tau-1 \le k-\sigma$. 
Note that $A \cap U$ consists of two parts: 
$A_k \cap U = \{a_k^1, \dots, a_k^\omega\}$ and $U_t \cap A = \{u_t^1, \dots, u_t^\sigma\}$. 
For convenience, let $z = a_k^{\omega} = u_{\sigma\tau+1}$. 
Besides $A'$, $A_k$, $U'$, and $U_t$, we still have 
$5k-3-(\sigma+\omega+\tau)-(k-1)-(k-\sigma)-(k-\omega)-(k-1) = k-1-\tau$
vertices left. 
Let $V-A-U = \{v_1, \dots, v_{k-1-\tau}\}$. 

We now set $A_i, C_i, D_i$ for $i \in [k+\sigma]$ to construct $G_R$. 
For $i \in [k-1]$, 
we set $A_i = \{a_i\}$, 
and $C_i = U' \cup (A_k \cap U \setminus \{a_k^{\lceil\frac{i}{\tau}\rceil}\})$. 
Note that $\omega-1 = \sigma+1 \le \lceil\frac{k-1}{\tau}\rceil \le \omega$, 
since $\sigma\tau+\omega \le k-1 \le \omega\tau$. 
We set $C_k = U_t$. 
For $i=k+i'$ where $i' \in [\sigma]$, 
We set $A_i = \{u_t^{i'}\}$, 
and let $C_i$ contains all vertices in $V-A-U$, 
and $\tau$ vertices in $U'_a$, 
such that $C_i \cap U'_a=\{u_{(i'-1)\tau+1}, \dots, u_{i'\tau}\}$. 
We set $D_i = V(G) \setminus (\bigcup_{i'=1}^i A_{i'} \cup C_i)$ for $i \in [k+\sigma]$. 
The red graph $G_R$ consists of all AA-type and AC-type edges. 

Next, we set $U_s, X_s, Y_s$ for $s \in [k+1]$ to construct $G_B$. 
For $s \in [t-1]$, 
we set $U_s = \{u_s\}$. 
For $s \in [\sigma\tau]$, 
where $u_s \in U'_{a}$, 
let $X_s = A_k \cup (U_t \cap A \setminus \{u_t^{\lceil \frac{s}{\tau}\rceil}\})$. 
Notice that for each $s \in [\sigma\tau]$, 
$u_su_t^{\lceil \frac{s}{\tau}\rceil}$ is always an AC-type edge. 
For $s = \sigma\tau+1$, 
where $u_s = z$, 
let $X_{\sigma\tau+1} = (V-A-U) \cup U'_{b} \cup \{a_{\tau(\omega-1)+1}, \dots, a_{k-1}\}$. 
Note that $|X_{\sigma\tau+1}| = (k-1-\tau)+(k-\omega-\sigma\tau)+(k-1-\tau(\omega-1)) 
= 3k - 2 - \sigma\tau - \omega\tau - \omega
\le k-1$, 
as $\sigma\tau+\omega\tau+\omega\ge 2k-1$. 
For $s \in [\sigma\tau+2, t-1]$, 
where $u_s \in U'_{b}$, 
let $X_s = A_k \cup (U_t \cap A \setminus \{z\})$. 
We set $X_t = A'$. 
For $s = t+s'$ where $s' \in [\omega-1]$, 
we set $U_s = \{a_k^{s'}\}$, 
and let $X_s$ contains all vertices in $V-A-U$, 
and $\tau$ vertices in $A'$, 
such that $X_s \cap A'=\{a_{(s'-1)\tau+1}, \dots, a_{s'\tau}\}$. 
Notice that this covers all the non-neighbours of $a_k^{s'}$ in $A'$ in $G_R$. 
We set $Y_s = V(G) \setminus (\bigcup_{s'=1}^s U_{s'} \cup X_s)$ for $s \in [k+1]$. 
The blue graph $G_B$ consists of all UU-type and UX-type edges. 

By definition, 
it is not difficult to see that $((A_i, C_i, D_i))_{i=1}^{k+\sigma}$ is a strong $(2k-2, k)$-decomposition of $G_R$, 
and $((U_s, X_s, Y_s))_{s=1}^{k+1}$ is a strong $(2k-2, k)$-decomposition of $G_B$. 
Moreover, we can verify $R \cup B$ covers all edges. 
Hence, there does not exist a $k$-connected monochromatic subgraphs with at least $n-2k+2$ vertices,  
Thus, the example we propose confirmed Theorem \ref{main}(2).

\section{Conclusion}

In this paper, 
we presented a counterexample of Bollob\'{a}s and Gy\'{a}rf\'{a}s' conjecture with 
$n = \lfloor 5k-2.5-\sqrt{8k-\frac{31}{4}} \rfloor$. 
We also verified the conjecture for $n > 5k-2.5-\sqrt{8k-\frac{31}{4}}$ for $k \ge 16$. 
We believe the requirement of $k\ge 16$ can be relaxed. 
We also provided a shorter proof for $n \ge 5k-\min\{\sqrt{4k-2}+3,0.5k+4\}$ and a simpler counter-example with order $n=5k-3-2\lceil\sqrt{2k-1}\rceil$ in an earlier version of this paper, which could be found on Arxiv \cite{2008.09001}.

%

Recall that Matula showed that when $n> (3+\sqrt{11/3})(k-1)\approx 4.91k$, any 2-edge-coloring of $K_n$ has a $k$-connected monochromatic subgraph. Even though our requirement for $n$ is a little more strict, our result can satisfy the restriction on the order of the $k$-connected monochromatic subgraph. Probably our technique is enough to improve the result of Matula.

Moreover, our result improves the bounds for some other related problems. 
For example, since every $k$-connected graph has minimum degree at least $k$, 
Theorem \ref{main} (1) leads to the following corollary: 

\begin{coro}
If $n \ge 5k-2.5-\sqrt{8k-\frac{31}{4}}$ where $k$ is sufficiently large, 
then for any 2-edge-colored $K_n$, 
there exists a monochromatic subgraph with minimum degree at least $k$, 
which contains at least $n-2k+2$ vertices. 
\end{coro}

This problem concerning monochromatic large subgraphs with a specified minimum degree in edge-colored graphs 
has been studied by Caro and Yuster \cite{CY03}. 
By applying their conclusion on 2-edge-colored complete graphs, 
the corollary holds when $n \ge 7k+4$, 
which could be covered by our result.

Furthermore, 
there are some open problems related to Bollob\'{a}s and Gy\'{a}rf\'{a}s' conjecture, 
such as the multicoloring version of the conjecture, 
and forcing large highly connected subgraphs with given independence number. 
We believe the decomposition and calculation technique we introduced in this paper could also be applied to improve the results of those topics.

\section{Acknowledgments}

This work was partially supported by the National Natural Science Foundation of China [Grant No.11931006, 12201390], the National Key R\&D Program of China [Grant No. 2020YFA0713200, 2022YFA1006400], and the Shanghai Dawn Scholar Program [Grant No. 19SG01]. 

We would like to show our gratitude to Prof. Xingxing Yu, Georgia Institute of Technology, 
for sharing his pearls of wisdom with us during the course of this research. 
We are immensely grateful to Henry Liu, Sun Yat-sen University, 
for his comments on an earlier version of the manuscript. 
We would also like to express our thanks to Chaoliang Tang, Fudan University for his careful proofreading.


\begin{thebibliography}{10}

\bibitem{BG08}
Bela Bollob\'{a}s and Andr\'{a}s Gy\'{a}rf\'{a}s.
\newblock Highly connected monochromatic subgraphs.
\newblock {\em Discrete Mathematics}, 308(9):1722 -- 1725, 2008.

\bibitem{CY03}
Yair Caro and Raphael Yuster.
\newblock The order of monochromatic subgraphs with a given minimum degree.
\newblock {\em The electronic journal of combinatorics}, 10, 01 2003.

\bibitem{GraphT}
Reinhard Diestel.
\newblock {\em {Graph Theory (3rd edition), Graduate Texts in Mathematics}}.
\newblock Springer, 2006.

\bibitem{FLS16}
Shinya Fujita, Henry Liu, and Amites Sarkar.
\newblock Highly connected subgraphs of graphs with given independence number
  (extended abstract).
\newblock {\em Electronic Notes in Discrete Mathematics}, 54:103 -- 108, 2016.
\newblock Discrete Mathematics Days - JMDA16.

\bibitem{FLS18}
Shinya Fujita, Henry Liu, and Amites Sarkar.
\newblock Highly connected subgraphs of graphs with given independence number.
\newblock {\em European Journal of Combinatorics}, 70:212 -- 231, 2018.

\bibitem{FM11}
Shinya Fujita and Colton Magnant.
\newblock Note on highly connected monochromatic subgraphs in $2$-colored
  complete graphs.
\newblock {\em Electr. J. Comb.}, 18, 01 2011.

\bibitem{FM13}
Shinya Fujita and Colton Magnant.
\newblock Forbidden rainbow subgraphs that force large highly connected
  monochromatic subgraphs.
\newblock {\em SIAM Journal on Discrete Mathematics}, 27, 07 2013.

\bibitem{JWW14}
Ze-min Jin, Yu-ling Wang, and Shi-li Wen.
\newblock Note on 2-edge-colorings of complete graphs with small monochromatic
  k -connected subgraphs.
\newblock {\em Applied Mathematics-a Journal of Chinese Universities Series B},
  29:249--252, 2014.

\bibitem{KMN17}
Robert Kati\'c, Colton Magnant, and Pouria Salehi~Nowbandegani.
\newblock Forbidden properly edge-colored subgraphs that force large highly
  connected monochromatic subgraphs.
\newblock {\em Graphs and Combinatorics}, 33:969, 2017.

\bibitem{LW18}
Xihe Li and Ligong Wang.
\newblock Forbidden rainbow subgraphs that force large monochromatic or
  multicolored k-connected subgraphs, 2018.

\bibitem{HL}
Henry Liu.
\newblock personal communication.

\bibitem{LMP08}
Henry Liu, Robert Morris, and Noah Prince.
\newblock Highly connected multicoloured subgraphs of multicoloured graphs.
\newblock {\em Discrete Mathematics}, 308(22):5096 -- 5121, 2008.

\bibitem{LMP09}
Henry Liu, Robert Morris, and Noah Prince.
\newblock Highly connected monochromatic subgraphs of multicolored graphs.
\newblock {\em Journal of Graph Theory}, 61:22--44, 05 2009.

\bibitem{2008.09001}
Chunlok Lo, Hehui Wu, and Qiqin Xie.
\newblock Monochromatic $k$-connected subgraphs in 2-edge-colored complete
  graphs, 2020.
\newblock arXiv:2008.09001v2.

\bibitem{Lu15}
Tomasz {\L}uczak.
\newblock Highly connected monochromatic subgraphs of two-colored complete
  graphs.
\newblock {\em Journal of Combinatorial Theory, Series B}, 117, 12 2015.

\bibitem{Mader72}
W.~Mader.
\newblock Existenz n-fach zusammenh\"{a}ngender teilgraphen in graphen
  gen\"{u}gend gro{\ss}er kantendichte.
\newblock {\em Abh. Math. Sem Univ. Hamburg}, 37:86 -- 97, 1972.

\bibitem{Ma19}
Colton Magnant.
\newblock Colored complete hypergraphs containing no rainbow berge triangles.
\newblock {\em Theory and Applications of Graphs}, 6, 08 2019.

\bibitem{Mat83}
David~W. Matula.
\newblock Ramsey theory for graph connectivity.
\newblock {\em Journal of Graph Theory}, 7(1):95--103, 1983.

\bibitem{Ramsey30}
Frank~Plumpton Ramsey.
\newblock On a problem of formal logic.
\newblock {\em Proc. Lond. Math. Soc.}, 30:264 -- 286, 1930.

\end{thebibliography}

\end{document}